\begin{document}

%%%
%%%
%%%%%%%%%%%%%%%%%%%%%%%%%%%%%%%%%%%%%%%%%%%%%%%%%%%%%%%%%%%%%%%%%%%%%%%%%%
%%
%%%
\title{Enumeration of linear chord diagrams}

\author{J.\ E.\ Andersen \and R.\ C.\  Penner \and C.\ M.\ Reidys \and M.\ S.
Waterman}

\institute{J.\ E.\ Andersen \at
Center for the Quantum Geometry of Moduli Spaces, Aarhus University, 
DK-8000 Aarhus C, Denmark \\
\email{andersen{\char'100}imf.au.dk}
\and
R.\ C.\  Penner \at
Center for the Quantum Geometry of Moduli Spaces, Aarhus University, 
DK-8000 Aarhus C, Denmark\\
and Departments of Math and Physics, Caltech, Pasadena, CA 91125 USA \\
\email{rpenner{\char'100}imf.au.dk}
\and
C.\ M.\ Reidys \at
Center for Combinatorics, LPMC-TJKLC, Nankai University, Tianjin
300071, P.R.~China\\
Tel.: +86-22-2350-6800\\
Fax:   +86-22-2350-9272 \\
\email{duck{\char'100}santafe.edu}
\and
M.\ S. Waterman \at
Departments of Biological Sciences, Mathematics, Computer Science, 
University of Southern California, Los Angeles, CA 90089, USA \\
\email{msw{\char'100}usc.edu}
}

\date{October, 2010}

\maketitle

\begin{abstract}
A linear chord diagram canonically determines a fatgraph and hence has an associated genus $g$.
We compute the natural generating function ${\bf C}_g(z)=\sum_{n\geq 0} {\bf c}_g(n)z^n$ for the number ${\bf c}_g(n)$
of linear chord diagrams of fixed genus $g\geq 1$ with a given number $n\geq 0$ of chords and find the remarkably simple formula
${\bf C}_g(z)=z^{2g}R_g(z) (1-4z)^{{1\over 2}-3g}$,
where $R_g(z)$ is a polynomial of degree at most
$g-1$ with integral coefficients satisfying $R_g({1\over 4})\neq 0$ and $R_g(0) = {\bf c}_g(2g)\neq 0.$ 
In particular, ${\bf C}_g(z)$ is algebraic over $\mathbb C(z)$,
which generalizes the corresponding classical fact for the generating function
${\bf C}_0(z)$ of the Catalan numbers.  As a corollary, we also calculate
a related generating function germaine to the enumeration of knotted
RNA secondary structures, which is again found to be algebraic.
\end{abstract}

%%%
%%%%%%%%%%%%%%%%%%%%%%%%%%%%%%%%%%%%%%%%%%%%%%%%%%%%%%%%%%%%%%%%%%%%%%%%%
%%%

\section{Introduction}\label{Intro}

A linear chord diagram consists of a line segment called its backbone to which are attached a number $n\geq 0$ of chords with distinct endpoints.
These  combinatorial structures occur in a number of instances  in pure mathematics including finite type invariants of knots and links
\cite{Barnatan95,Kontsevich93}, the representation theory of Lie algebras \cite{CSM}, the geometry of moduli spaces of flat connections on surfaces
\cite{AMR1,AMR2}, mapping class groups \cite{BAMP} and the  Four-Color Theorem \cite{Barnatan97},  and in applied mathematics including codifying the
pairings among nucleotides in RNA molecules \cite{Reidys11}, or more generally
the contacts of any binary macromolecule \cite{PKWA,Penner-Waterman,Waterman95},
and in the analysis of
data structures \cite{Flajolet80,FFV}.

As the title indicates, this paper is dedicated to enumerative problems associated with linear chord diagrams.  It is obvious that
there are $(2n-1)!!$ many distinct linear chord diagrams with $n$ chords. 
Partly because of their relevance to the number of finite type invariants of
knots, which corresponds to a quotient of the collection of linear chord
diagrams, sophisticated related enumerative problems have been studied in 
\cite{Flajolet-Noy,Klazar,Ng-S,Stoimenow98,Zagier01}.
In contrast to these, our approach depends upon a certain filtration of the
collection of all linear chord diagrams as follows.

\begin{figure}[ht]
\centerline{\epsfig{file=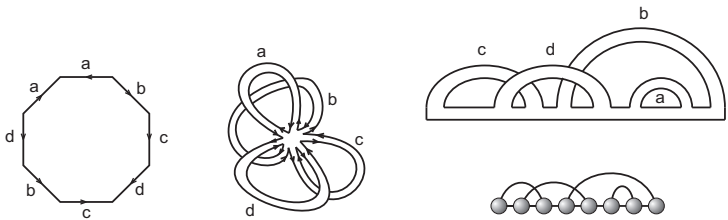,width=0.8\textwidth} \hskip8pt}
\caption{\small Equivalence between a pairwise identification of the
sides of an octagon (left), the surface $F({\mathbb{G}})$ where all
vertices of the backbone are collapsed into a single vertex (note we
therefore have $2-2g-r=1-n$) (middle) and the surface
$F({\mathbb{G}})$ induced by a linear chord diagram $G$ (right). } \label{F:collaps}
\end{figure}

Drawing a picture of a linear chord diagram $G$ in the plane with its backbone lying in the real axis and the chords in the upper half-plane
determines a cyclic ordering on the half-edges of the underlying graph incident on each vertex, thus defining
a corresponding ``fatgraph" $\mathbb G$ to which is canonically associated a topological surface
$F(\mathbb G)$ (cf. $\S$\ref{S:fatgraphs}) of some
genus; see Figure \ref{F:collaps}.  It follows that to each linear chord diagram $G$ is naturally associated the corresponding genus
of $F(\mathbb G)$,
and we let ${\bf c}_g(n)$ denote the number of distinct linear chord diagrams with $n$ chord of genus $g$ with corresponding
generating function ${\bf C}_g(z)=\sum_{n\geq 0} {\bf c}_g(n)z^n$, for each $g\geq 0$.

In particular, the Catalan numbers ${\bf c}_0(n)$, i.e., the number of triangulations of a polygon with $n+2$ sides,  also
enumerate linear chord diagrams of genus zero.  Their
recursion is evidently given by ${\bf c}_0(n+1)=\sum_{i=0}^n{\bf c}_0(i){\bf  c}_0(n-i)$ with basis ${\bf c}_0(0)=1$, which
implies ${\bf C}_0(z)=\sum_{n\geq 0} {\bf c}_0(n) z^n=1+z[{\bf C}_0(z)]^2$, whence
$${\bf C}_0(z)={{1-\sqrt{1-4z}}\over{2z}}={{2}\over{1+\sqrt{1-4z}}},$$
and so
$${\bf c}_0(n)=\binom{2n}{n} {1\over {n+1}}={{(2n)!}\over {(n+1)!n!}}$$
since $$\sqrt{1+z}=1-2\sum_{n\geq 1}\binom{2n-2}{n-1}~({{-1}\over {\hskip 1.3ex 4}})^n{{z^n}\over n}.$$

In fact, these numbers ${\bf c}_g(n)$ had been computed in another generating function over two decades ago
by Harer-Zagier \cite{Harer-Zagier} in the equivalent guise of  the number of side pairings of a polygon with $2n$ sides
that produce a surface of genus $g$, cf. Figure \ref{F:collaps}, namely,
$$1+2\sum_{n\geq 0} \sum_{2g\leq n} {{{\bf c}_g(n)N^{n+1-2g}}\over {(2n-1)!!}}~z^{n+1}=\biggl ({{1+z}\over {1-z}}\biggr )^N,$$
a striking and beautiful formula which is the starting point for our computations and for which we therefore
also provide a novel proof in Lemma \ref{L:claim1} depending only on the character theory of the symmetric group.
Indeed, this formula was a crucial intermediate step for the calculation of the virtual Euler characteristic
of Riemann's moduli space given in \cite{Harer-Zagier} and independently
in \cite{Penner88} using a novel matrix model.

The topological filtration of linear chord diagrams by genus discussed here was also considered in \cite{Cori-Marcus}, where enumerations
for genus one and for maximal genus were obtained.  Furthermore, there is physics literature initiated in \cite{Orlandetal02}
on RNA enumeration based on matrix models which relies on the genus of linear chord diagrams including
the derivation of another expression for the ${\bf c}_g(n)$ in terms of Laguerre polynomials \cite{Vernizzietal05} and the comparison of expected with observed genera \cite{Bonetal08,Vernizzietal06}.

Though the numbers ${\bf c}_g(n)$ have thus been known in various forms for some time, the natural generating functions
${\bf C}_g(z)$ have not been computed heretofore, and it is this that we accomplish here.  In fact, we shall prove in Theorem
\ref{E:GF} that
for any $g\ge 1$
\begin{eqnarray*}
\mathbf{C}_g(z) = \, P_g(z)\frac{\sqrt {1-4\,z}}{(1-4z)^{3g}},
\end{eqnarray*}
where $P_g(z)$ is a polynomial defined over the integers of degree at most $(3g-1)$ that is divisible by $z^{2g}$ with
$P_g(1/4)\neq 0$.  In particular and surprisingly, $\mathbf{C}_g(z)$ is algebraic over $\mathbb C(z)$ for all $g\geq 1$ just as is
the Catalan generating function $\mathbf{C}_0(z)$, which our results thus generalize.

In fact, the polynomials $P_g(z)$ empirically have degree exactly $3g-1$ and all their coefficients are positive.  These attributes of positivity and exact degree have yet to be proved however.  The first several such polynomials are given below.
\begin{eqnarray*}
P_1(z) &=& z^2,\\
P_2(z) &=& 21z^4\, \left( z+1 \right)\\
P_3(z) &=&  11z^6\, \left( 158\,{z}^{2}+558\,z+135 \right),\\
P_4(z) &=&143z^8\left( 2339\,{z}^{3}+18378\,{z}^{2}+13689\,z+1575 \right),\\
P_5(z) &=&  88179z^{10}\, \left( 1354\,{z}^{4}+18908\,
{z}^{3}+28764\,{z}^{2}+9660\,z+675 \right).
\end{eqnarray*}
In light of these properties, it is natural to speculate that the polynomials 
$P_g(z)$ themselves solve an enumerative problem, and it is interesting in 
this vein to compare coefficients for $P_2$ and $P_3$ with fatgraph tables 
such as those in \cite{Milgram-Penner}.
   One might further speculate that there may be a purely combinatorial 
topological proof of Theorem \ref{E:GF} based on a construction that in 
some way ``inflates" these structures using genus zero diagrams, to wit
$$\aligned
{\bf C}_g(z)&= P_g(z) (\sqrt{1-4z})^{1-6g}\\
&=P_g(z)\biggl ( {{{\bf C}_0(z)}\over{2-{\bf C}_0(z)}}\biggr )^{6g-1}\\
&=P_g(z) \bigl ( {\bf C}_0(z)\bigr )^{6g-1}\biggl (1+({\bf C}_0(z)-1 
\bigr )+({\bf C}_0(z)-1 \bigr )^2+\cdots \biggr )^{6g-1}\\
&=P_g(z) \bigl ( {\bf C}_0(z)\bigr )^{6g-1}\biggl 
(1+z({\bf C}_0(z))^2 +z^2({\bf C}_0(z))^4+\cdots \biggr )^{6g-1}.\\
\endaligned$$

Motivated by enumerative problems for RNA, we also study a further class of combinatorial objects as follows.
A ``partial linear chord diagram" is a collection of chords attached to a backbone with $n$ vertices, where we now drop
the condition for a linear chord diagram that each vertex has an incident chord.  Furthermore,
two distinct chords with respective endpoints $i_1<j_1$ and $i_2<j_2$ are ``consecutively parallel" if $i_1=i_2-1\leq j_2=j_1-1$, consecutive parallelism generates the equivalence relation of  ``parallelism" whose equivalence classes are called {``stacks"}.
A chord connecting vertices which are consecutive along the backbone is called 
a ``1-chord".

A ``macromolecular diagram of minimum helix length $\sigma\geq 1$ on $n\geq 0$ vertices" is a partial linear chord diagram on $n$ vertices with no 1-chords so that each stack contains at least $\sigma$ edges; let ${\bf d}_{g,\sigma}(n)$ be the number of
all such partial linear chord diagrams of genus $g$ with generating function
${\bf D}_{g,\sigma}(z)=\sum_{n\geq 0 } {\bf d}_{g,\sigma}(n) z^n$.  We compute this generating function for $g,\sigma\geq 1$ in Theorem
\ref{T:genus} to be
\begin{eqnarray*}
{\bf D}_{g,\sigma}(z) & = & \frac{1}{u_\sigma(z) z^2-z+1}\
                            {\bf C}_g\left(\frac{u_\sigma(z)z^2}
                            {\left(u_\sigma(z) z^{2}-z+1\right)^2}\right),
\end{eqnarray*}
where $u_\sigma(z)
=\frac{(z^2)^{\sigma-1}}{z^{2\sigma}-z^2+1}$.
In particular, ${\bf D}_{g,\sigma}(z)$ is also algebraic over ${\mathbb C}(z)$, and
for arbitrary but fixed $g$ and $\gamma_2\approx 1.9685$, we have
\begin{equation}
{\bf d}_{g,2}(n)\sim k_g\,n^{3(g-\frac{1}{2})} \gamma_2^n,
\end{equation}
for some constant $k_g$.

The exponential growth rate of 1.9685 shows that the number of 
macromolecular diagrams grows much more slowly than the number of RNA
sequences over the natural alphabet. This implies the existence of
neutral networks \cite{Kimura,Reidys97a,Reidys02}, i.e.,
vast extended sets of RNA sequences all
folding into a single macromolecular structure. These neutral networks
are of key importance in the context of neutral evolution of RNA sequences.

%%%
%%%%%%%%%%%%%%%%%%%%%%%%%%%%%%%%%%%%%%%%%%%%%%%%%%%%%%%%%%%%%%%%%%%%%%%%%
%%%

\section{Background and Notation}\label{Back}

We formulate the basic terminology and notation for graphs and linear chord diagrams,
establish notions and notations for the symmetric group, recall
the fundamental ideas and constructions for fatgraphs, and finally combine
these ingredients for application in subsequent sections.

\subsection{Graphs and linear chord diagrams}

Let $G$ be a finite {\it graph} in the usual sense of the term comprised of vertices $V(G)$ and edges $E(G)$, where edges do not contain their endpoints and are not necessarily uniquely determined by them;  in other words, $G$ is a finite one-dimensional CW complex.   Removing a single point from an edge produces two components, each of which is called a {\it half-edge} of $G$.
A half-edge which contains a vertex $v$ in its closure is said to be {\it incident} on $v$,
and the number of distinct half-edges incident on $v$ is its {\it valence}.

Let $B=B_n$ denote the closed interval $[1,n]$ of real numbers between $1$ and $n\geq 2$ regarded as a graph with
$V(B)=B\cap\mathbb Z$ and $E(B)=\{(i,i+1):i=1,\ldots ,n-1\}$.  We shall refer
to the least and greatest elements of $V(B)$ as {\it extreme vertices}, which are univalent in $B$, and to the other vertices of $B$ as {\it interior vertices},
which are bivalent in $B$.
A {\it partial linear chord diagram $C$ on $n\geq 2$ vertices} is a graph containing $B$ so that $V(C)=V(B)$, each interior vertex of $C$ has valence at most 3, and the extreme vertices have valence at most 2.  $B$ is called the {\it backbone} of $C$, edges in $E(B)\subseteq E(C)$ are called {\it backbone edges}, and edges in the complement $E(C)-E(B)$ are called {\it chords}.

In particular, $C$ is called a {\it linear chord diagram} if every interior vertex has valence exactly three
and the extreme vertices have valence exactly two;  in particular, the number of vertices for a linear chord diagram is necessarily even.

Two distinct chords $e_1,e_2\in E(C)-E(B)$ in a (partial) linear chord diagram $C$ with respective endpoints $i_1<j_1$ and $i_2<j_2$ are {\it consecutively parallel} if $i_1=i_2-1\leq j_2=j_1-1$.  Consecutive parallelism generates the equivalence relation of {\it parallelism} on $E(C)$, and equivalence classes are called {\it stacks}.  A linear chord diagram in which every stack has cardinality at most one is called a {\it shape}.

Let  $\mathscr P(n)$ denote the collection of all partial linear chord diagrams on $n$ vertices and
${\mathscr C}(n), {\mathscr S}(n)$  the collections of
all linear chord diagrams and shapes on $2n$ vertices, respectively, so in particular, we have the inclusions $\mathscr P(2n)\supseteq \mathscr C(n)\supseteq \mathscr S(n)$.  There is a natural projection
$$\vartheta: \sqcup_{n\geq 1} {\mathscr P}(n)\to\sqcup_{n\geq 1} {\mathscr S}(n)$$
defined by contracting to a point any backbone edge at least one of whose interior endpoints is not trivalent or extreme endpoints is not bivalent as well as collapsing each non-empty stack onto a single chord, i.e.,  all least vertices  of  chords in a stack
are collapsed to a single vertex and likewise all greatest vertices.

Define a {\it $1$-chord} in a (partial) linear chord diagram to be a chord connecting two consecutive vertices $i$ and $i+1$ in the backbone.

1-chords are typically proscribed in the partial linear chord diagrams
that arise in applications to RNA owing to tensile rigidity of the backbone.  Furthermore, stacks of small cardinality are typically energetically unfavorable,
and one introduces a parameter $\sigma\geq 1$ specifying the minimum allowed cardinality of a stack.
A {\it macromolecular diagram of minimum helix length $\sigma\geq 1$ on $n\geq 0$ vertices} is a partial linear chord diagram on $n$ vertices with no 1-chords so that each stack contains at least $\sigma$ edges; let $\mathscr D_\sigma(n)$ denote the collection of all such partial linear chord diagrams. 

Our main results solve enumerative problems for linear chord diagrams, shapes, and macromolecular diagrams of fixed minimum helix length.

\subsection{Permutations}

The symmetric group $S_{2n}$ of all permutations on $2n$ objects will play a key role in our calculations.
We shall adopt the standard notation writing $(i_1,i_2,\ldots ,i_k)$ for the cyclic permutation $i_1\mapsto i_2\mapsto\cdots\mapsto i_k\mapsto i_1$ on distinct objects $i_1,\ldots , i_k$ and shall
compose permutations $\pi,\tau$ from right to left, so that $\pi\circ\tau(i)=\pi(\tau(i))$.
An {\it involution} $\iota $ is a permutation so that $\iota\circ\iota$ is the identity.

The conjugacy class of $\pi\in S_{2n}$ will be denoted $[\pi]$.  Conjugacy classes in $S_{2n}$ are identified with classes of partitions of $\{1,\ldots ,2n\}$, where in the standard slightly abusive notation,
$\pi\in[\pi ]=[1^{\pi_1} \cdots {2n}^{\pi_{2n}}]$ denotes a partition comprised of $\pi_k\geq 0$ many parts of size $k$, for $k=1,\ldots ,2n$,
i.e., $\pi$ is comprised of $\pi_k$ many $k$ cycles of pairwise disjoint supports, for $k=1,\ldots ,2n$, so
necessarily $\sum_{k=1}^{2n} k\pi_k=2n$.  A permutation $\pi$ is an involution if and only if
$\pi\in[1^{\pi_1}2^{\pi_2}]$, and it is fixed point free if and only if  $\pi_1=0$.  The number of elements in the class $[\pi]$ is given by
$$|\pi|= |[\pi]|= {{2n!}\over{\prod_{k=1}^{2n} k^{\pi_k}\pi_k!}}.$$

The irreducible characters $\chi^Y$ of $S_n$ are labeled by Young tableaux $Y$.  See
\cite{Sagan} for further details and background.

\subsection{Fatgraphs}\label{S:fatgraphs}

A {\it fatgraph} $\mathbb G$ is a graph $G$ together with the specification of a collection of cyclic orderings, called the fattening,
one such cyclic ordering on the half-edges incident on $v$ for each $v\in V(G)$.

A fatgraph $\mathbb G$ uniquely determines an oriented surface $F({\mathbb G})$ with boundary as follows.
For each $v\in V(G)$, consider an oriented surface isomorphic to a polygon $P_v$ of $2k$ sides containing in its interior a
single vertex of valence $k$ each of whose incident edges are also incident on a univalent vertex contained in alternating sides of $P_v$, which are identified
with the incident half-edges in the natural way
so that the induced counter-clockwise cyclic ordering on the boundary of $P_v$ agrees with the fattening of $\mathbb G$ about $v$.
The surface $F(\mathbb G)$ is the quotient of the disjoint union $\sqcup_{v\in V(G)} P_v$, where the frontier edges,
which are oriented with the polygons on their left, are identified by an orientation-reversing homeomorphism if the corresponding half-edges lie in a common edge of $G$.   This defines the oriented surface $F(\mathbb G)$, which is connected if $G$ is and has some associated genus $g(\mathbb G)\geq 0$ and number $r(\mathbb G)\geq 1$ of boundary components.

The various trees in the polygons $P_v$, for $v\in V(G)$, combine to
give a graph identified with $G$ embedded in $F(\mathbb G)$, so that we regard $G\subseteq F(\mathbb G)$.
In fact, $G$ is a deformation retraction of the surface $F(\mathbb G)$ by construction, so their  Euler characteristics agree, namely,
$$\chi(G)=\#V(G)-\#E(G)=2-2g(\mathbb G)-r(\mathbb G)=\chi (F(\mathbb G))$$
provided $G$ is connected, where $\#$ denotes cardinality.

A fatgraph $\mathbb G$ is uniquely determined by a pair of permutations on the half-edges of its underlying graph $G$ as follows.
Let $v_k\geq 0$ denote the number of $k$-valent vertices, for each
$k\geq 1,\ldots ,K$, where $K$ is the maximum valence of vertices of $G$ and
$$\sum_{k\geq 1} ^K kv_k=2\# E(G)=2n$$
is the number of half-edges.
The valencies of vertices of $G$ are thus succinctly described by a
permutation in the conjugacy class $[1^{v_1}2^{v_2}\cdots K^{v_K}]$.

In order to explicitly determine a permutation in this class,
specify a linear order on $V(G)$ as well as a distinguished half-edge incident on each $v\in V(G)$.
This determines a unique linear ordering on the half-edges of $G$ which restricts to the
the fattening at each vertex, where the distinguished half-edge is least,
so that one half-edge furthermore precedes another if it is incident on a preceding vertex.
There is thus a well-defined permutation $\tau\in [1^{v_1}2^{v_2}\cdots K^{v_K}]\subseteq S_{2n}$
whose disjoint cycles correspond to the fattenings at each vertex.

The second permutation $\iota\in S_{2n}$ is the product
$\iota=\prod_{i=1}^n (h,h')$
over all edges $e\in E(G)$
of disjoint transpositions $(h,h')$, where the distinct half-edges
$h,h'$ lie in the common edge $e$.  Thus, whereas the permutation $\tau$ determines the
valencies of vertices, the fixed-point free involution $\iota\in [2^n]$ determines the edges of $G$.

Several basic facts follow from this representation of a fatgraph $\mathbb G$ as a pair $\tau,\iota\in S_{2n}$
of permutations.
One important point that is easy to confirm is that the boundary components of $F(\mathbb G)$ are in one-to-one correspondence
with the cycles of $\tau\circ\iota$, i.e.,
$$\aligned
r(\mathbb G)&=~{\rm the~number~of~disjoint~cycles~comprising}~\tau\circ\iota\\
&=(\tau\circ\iota)_{1}+(\tau\circ\iota)_{2}+\cdots +(\tau\circ\iota)_{2n}.\\
\endaligned$$

Furthermore, isomorphism classes of fatgraphs with vertex valencies
$(v_k)_{k=1}^K$ are evidently in bijection with conjugacy classes of pairs $\tau,\iota\in S_{2n}$,
where $\tau\in [1^{v_1}2^{v_2}\cdots K^{v_K}]$ and $\iota\in [2^n]$.  In particular,
as a data type on the computer, fatgraphs are easily stored and manipulated as pairs
of permutations, and various enumerative problems can be formulated
in terms of Young tableaux.

See \cite{Penner88,PKWA} for more details on fatgraphs
and \cite{BIZ,Harer-Zagier,Milgram-Penner,Penner92,Itzykson-Zuber}
for examples of fatgraph enumerative
problems in terms of character theory for the symmetric groups.

\subsection{Fatgraphs and linear chord diagrams}

A regular planar projection of
a graph in 3-space determines a corresponding fattening on it, namely, the
counter-clockwise cyclic ordering in the plane of projection.  The crossings of edges
in the plane of projection can be arbitrarily resolved into under/over crossings
without affecting the resulting isomorphism class.  Furthermore, a band about each edge can
be added to
a neighborhood of the vertex set in the plane of projection respecting orientations in order to give an explicit
picture of the associated surface embedded in 3-space.  An example with
a single 8-valent vertex is illustrated in the middle of Figure \ref{F:collaps}; this fatgraph $\mathbb G$
can
be described by the pair $\tau=(1,2,3,4,5,6,7,8)$, $\iota=(1,5)(2,3)(4,7)(6,8)$, and indeed,
the cycles of $\tau\circ\iota=(1,6)(2,4,8,7,5)(3)$ correspond to the boundary components of $F(\mathbb G)$,
which has Euler characteristic -3, $r(\mathbb G)=3$, and $g(\mathbb G)=1$.

The standard planar representation of a (partial) linear chord diagram $C$ represents the backbone as a
real interval and non-backbone edges as semi-circles in the upper half plane as in the example on the bottom-right
in Figure \ref{F:collaps}. This  planar projection thus implicitly determines
the {\it canonical fattening} $\mathbb C$ of $C$ as above.
An example is given on the top-right in the figure
with corresponding permutations given by $\tau=(1,2)(3,4,5)(6,7,8)\cdots (18,19,20)(21,22)$, $\iota=(2,7)(4,13)(10,21)(16,19)$, where vertices are ordered left to right and rightmost backbone half-edges are distinguished to determine the linear ordering on half-edges.

Given a (partial) linear chord diagram $C$ with its corresponding fatgraph $\mathbb C$, consider the graph $G$ arising from
$C$ by collapsing its backbone to a single vertex together with its fattening $\mathbb G$ induced from
$\mathbb C$ in the natural way; for example, the fatgraph $\mathbb G$ in the middle of Figure \ref{F:collaps} arises in
this manner from the fatgraph $\mathbb C$ on the top-right.  We claim that the surfaces $F(\mathbb C)$ and $F(\mathbb G)$ have the same genus and number of boundary components, and indeed, it follows by construction that the two surfaces are homeomorphic and hence have the same invariants.

In particular, a linear chord diagram $C$ on $2n$ vertices gives rise to a fatgraph $\mathbb G_C$ with a single
vertex of valence of $2n$ and a distinguished half-edge (namely, the one coming just after the location
of the collapsed backbone), i.e., a pair of permutations $\tau\in[2n]$, $\iota\in[2^n]$. We may
thus define the number of boundary components and genus of a linear chord diagram
$$\aligned
r(C)&=r(\mathbb G_C)={\rm the~number~of~disjoint~cycles~comprising}~\tau\circ\iota,\\
g(C)&=g(\mathbb G_C)= {1\over 2}\bigl(n+1-r(C)\bigr).\\
\endaligned$$
Conversely, the specification of a pair $\tau\in[2n]$, $\iota\in[2^n]$ uniquely determines
a linear chord diagram on $2n$ vertices.  Equivalently, the enumeration of pairs $\tau\in[2n]$, $\iota\in[2^n]$ corresponds
to all possible edge-pairings of a polygon with $2n$ sides, as illustrated for the ongoing
example on the left in Figure \ref{F:collaps}.
Summarizing, we have the following:

\begin{proposition}\label{isoprop}
The following four sets are in bijective correspondence
$$\aligned
&\mathscr C(n)=\{ {\rm chord~diagrams~on}~2n~{\rm vertices}\},\\
&\{{\rm univalent~fatgraphs~with}~n~{\rm edges~and~a~distinguished~half-edge}\},\\
&\{{\rm edge-pairings~on~a~polygon~with}~2n~{\rm labeled~sides}\},\\
&\{{\rm pairs}~\tau,\iota\in S_{2n}:\tau\in[2n]~{\rm and}~\iota\in[2^n]\}.\\
\endaligned$$
\end{proposition}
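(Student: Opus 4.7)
The plan is to exhibit explicit natural constructions between each adjacent pair of sets and verify they are mutually inverse. First I would set up the bijection between $\mathscr C(n)$ and univalent fatgraphs with a distinguished half-edge. Given $C\in\mathscr C(n)$ equipped with its canonical fattening $\mathbb C$, collapse the backbone $B$ to a single vertex $v$. Since each of the $2n$ backbone vertices contributed exactly one chord endpoint, $v$ has valence $2n$, and the cyclic ordering on its incident half-edges is inherited from the left-to-right order of chord endpoints along $B$ together with the canonical fattening. Designate as distinguished the half-edge coming from the leftmost backbone vertex, i.e., the one appearing immediately after the collapsed backbone in the induced cyclic order. The inverse construction takes a univalent fatgraph with distinguished half-edge and expands the vertex back to a backbone: enumerate the half-edges $h_1,h_2,\ldots,h_{2n}$ in cyclic order starting from the distinguished one, lay out backbone vertices $1,2,\ldots,2n$, attach $h_i$ at vertex $i$, and redraw each fatgraph edge as a chord in the upper half-plane.

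Second I would match univalent fatgraphs (with distinguished half-edge) with edge-pairings of a labeled $2n$-gon. Following the construction of $F(\mathbb G)$ from Section \ref{S:fatgraphs}, the unique $2n$-valent vertex is realized as the center of a polygon $P$ whose $2n$ sides carry the cyclic order of the fattening. Labeling the sides $1,2,\ldots,2n$ in this cyclic order starting from the side incident on the distinguished half-edge produces a labeled polygon, and each edge of the fatgraph identifies two half-edges and hence two sides, yielding an edge-pairing. The inverse simply reads off the fatgraph structure from a labeled polygon together with its edge-pairing.

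Third I would identify labeled polygon edge-pairings with the pairs $(\tau,\iota)$. The cyclic order on the labels $1,\ldots,2n$ around the polygon is encoded by the $2n$-cycle $\tau\in[2n]$, while the pairing of sides is exactly a fixed-point-free involution $\iota\in[2^n]$; the distinguished side corresponds to the choice of starting element in the cyclic notation of $\tau$. Conversely, any such pair $(\tau,\iota)$ specifies a labeled polygon with its side-pairing, and as already noted in the text preceding the proposition, the boundary components of the resulting surface are in bijection with the disjoint cycles of $\tau\circ\iota$, so the genus and boundary invariants introduced on chord diagrams match those of the fatgraph on the other side.

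The main obstacle, and indeed the only real subtlety, is careful bookkeeping of the layered data — half-edges, the distinguished element, side labels, and cyclic orderings — in order to confirm that the maps are well-defined and mutually inverse. In particular, one must check that the topological identification $F(\mathbb C)\cong F(\mathbb G_C)$ is a genuine homeomorphism, which follows immediately from the construction, since collapsing the backbone is a deformation retraction that preserves the fattening and thus the surface up to homeomorphism. Once these routine verifications are in place, the three bijections chain together to yield the desired four-way correspondence.
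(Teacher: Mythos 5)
Your proposal is correct and follows essentially the same route as the paper, which treats the proposition as a summary of the preceding constructions: collapse the backbone to a single $2n$-valent vertex with the half-edge just after the collapsed backbone distinguished, realize that vertex as a polygon with paired sides, and encode the fattening and the edge set by the pair $\tau\in[2n]$, $\iota\in[2^n]$. Your version merely spells out the inverse constructions and the bookkeeping more explicitly than the paper does.
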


Our first counting results will rely upon the bijection between $\mathscr C(n)$ and pairs of permutations in $S_{2n}$
established here.
The subsequent more refined results are tailored to
the macromolecular diagrams of interest in computational biology.

\subsection{Topological Filtrations and Generating Functions}

Let $\mathscr C_g(n)\supseteq \mathscr S_g(n)$ denote the collections of all linear chord diagrams and shapes of genus
$g\geq 0$ on $2n\geq 0$ vertices with respective generating functions
$$\aligned
{\bf C}_g(z)&=\sum _{n\geq 0}{\bf c}_g(n)z^n,\\
{\bf S}_g(z)&=\sum_{n\geq 0}{\bf s}_g(n)z^n,\\
\endaligned$$
where ${\bf c}_g(n)={\bf s}_g(n)=0$ if $2g>n$ since $r=n+1-2g$ has no positive solution $r>0$ unless $2g\leq n$.

Likewise, let $\mathscr C_g(n,m)\supseteq \mathscr S_g(n,m)$ denote the collections of all
linear chord diagrams and shapes of genus $g\geq 0$ on $2n\geq 0$ vertices containing $m\geq 0$ 1-chords with respective
generating functions
$$\aligned
{\bf C}_g(x,y)&=\sum_{m,n\geq 0}{\bf c}_g(n,m)x^ny^m,\\
{\bf S}_g(x,y)&=\sum_{m,n\geq 0}{\bf s}_g(n,m)x^ny^m,\\
\endaligned$$
where ${\bf c}_g(n,m)={\bf s}_g(n,m)=0$ if $2g>n$ or if $m>n$.

Notice that  the projection $\vartheta$ restricts to a surjection
$$\vartheta: \sqcup_{n\geq 0} {\mathscr C}_g(n,m)\to\sqcup_{n\geq 0} {\mathscr S}_g(n,m)$$
which collapses each stack to a chord and therefore evidently preserves both the genus $g$ and the number
$m$ of 1-chords.
For any shape $\gamma\in\sqcup_{n\geq 0}\mathscr S(n,m)$,  let
$$
\mathscr C_\gamma(n,m)=\mathscr C(n,m)\cap\vartheta^{-1}(\gamma)
$$ denote
the intersection with the fiber $\vartheta ^{-1}(\gamma)$ with its generating function
$
{\bf C}_\gamma (x,y)
$.

Turning finally to macromolecular diagrams, let $\mathscr D_{g,\sigma}(n)$ denote the subset of $\mathscr D_\sigma (n)$
comprised of diagrams with genus $g$ and let
$${\bf D}_{g,\sigma}(z)=\sum_{n\ge 0}{\bf d}_{g,\sigma}(n)z^n$$
denote the corresponding generating function.  Again, the projection $\vartheta$ restricts to a surjection
$$\vartheta: \sqcup_{n\geq 0} {\mathscr D}_{g,\sigma}(n)\to\sqcup_{n\geq 0} {\mathscr S}_g(n),$$
which preserves the genus.  For any shape  $\gamma\in\sqcup_{n\geq 0}\mathscr S(n)$,
let
$$
\mathscr D_{\gamma,\sigma}(n)=\mathscr D_{\sigma}(n)\cap\vartheta^{-1}(\gamma)
$$ denote
the intersection with the fiber $\vartheta ^{-1}(\gamma)$ with its generating function
$
{\bf D}_{\gamma,\sigma} (z)
$.

As a general notational point for any power series $P(z)=\sum a_iz^i$, we shall write $[z^i]P(z)=a_i$ for the  extraction of the coefficient $a_i$ of $z^i$.

%%%
%%%%%%%%%%%%%%%%%%%%%%%%%%%%%%%%%%%%%%%%%%%%%%%%%%%%%%%%%%%%%%%%%%%%%%%%%
%%%

\section{The generating function of linear chord diagrams of genus $g$}

%%%
%%%%%%%%%%%%%%%%%%%%%%%%%%%%%%%%%%%%%%%%%%%%%%%%%%%%%%%%%%%%%%%%%%%%%%%%%
%%%

We introduce the polynomial
\begin{equation}
P(n,x)=\sum_{\{g\mid  2g\le n\}} \mathbf{c}_g(n) \cdot x^{n+1-2g},
\end{equation}
which plays a key role in our
computation of ${\bf C}_g(z)$.
%%%
%%%%%%%%%%%%%%%%%%%%%%%%%%%%%%%%%%%%%%%%%%%%%%%%%%%%%%%%%%%%%%%%%%%%%%%%%
%%%
\begin{lemma}\cite{Harer-Zagier}\label{L:claim1}
Letting $x,z$ denote indeterminates, we have
\begin{eqnarray}
1+ 2\sum_{n=0}^\infty \frac{P(n,x)}{(2n-1)!!}\,
z^{n+1} & = & \left(\frac{1+z}{1-z}\right)^x.
\end{eqnarray}
\end{lemma}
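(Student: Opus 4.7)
The proof is to be entirely character-theoretic, as the authors foreshadow in the text. The starting point is Proposition~\ref{isoprop}, which identifies $\mathscr{C}(n)$ with the set of pairs $(\tau,\iota) \in S_{2n} \times S_{2n}$ satisfying $\tau \in [2n]$ and $\iota \in [2^n]$, while the genus $g$ of the associated diagram is encoded by $c(\tau\iota) = r = n+1-2g$, where $c$ denotes the number of cycles. Consequently
\[
P(n,x) \;=\; \sum_{\tau \in [2n]} \sum_{\iota \in [2^n]} x^{c(\tau\iota)},
\]
a weighted count of factorizations in $S_{2n}$ -- the natural setting for Frobenius's formula.

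Applying Frobenius to the class sums $K_{[2n]}$ and $K_{[2^n]}$ in the centre of $\mathbb{C}[S_{2n}]$ yields
\[
P(n,x) \;=\; \frac{|[2n]|\,|[2^n]|}{(2n)!}\, \sum_{\lambda \vdash 2n} \chi^\lambda([2n])\,\chi^\lambda([2^n]) \prod_{(i,j) \in \lambda}(x+j-i),
\]
where the content polynomial appears because the central element $\sum_\sigma x^{c(\sigma)}\sigma$ factors as $\prod_{k=1}^{2n}(x + J_k)$ in terms of the Jucys--Murphy elements, and the $J_k$ act on the Specht module $V^\lambda$ with eigenvalues equal to the contents of the cells of $\lambda$. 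The decisive simplification is that $[2n]$ is a single $2n$-cycle, so the Murnaghan--Nakayama rule forces $\chi^\lambda([2n])$ to vanish unless $\lambda$ is a hook $Y_k = (2n-k,1^k)$ for some $0 \le k \le 2n-1$, in which case $\chi^{Y_k}([2n]) = (-1)^k$ and the content product factors cleanly as $x(x+1)\cdots(x+2n-k-1)\cdot(x-1)(x-2)\cdots(x-k)$. The remaining hook values $\chi^{Y_k}([2^n])$ can then be evaluated by iterating Murnaghan--Nakayama, since any removable border $2$-strip of a hook again leaves a hook; this yields a closed expression for $P(n,x)$ as a single finite sum indexed by $k$.

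The last step is to substitute this closed form into $1 + 2\sum_{n \ge 0} P(n,x)\,z^{n+1}/(2n-1)!!$ and exchange the order of summation to put $k$ outermost; the inner sum over $n$ should then collapse to the product of the binomial expansions of $(1+z)^x$ and $(1-z)^{-x}$, which is precisely $((1+z)/(1-z))^x$. I expect this identification to be the main obstacle: the factors $(2n-1)!!$, the hook dimensions $\binom{2n-1}{k}$, the content products, and the values $\chi^{Y_k}([2^n])$ must all conspire to reproduce the clean binomial coefficients on the right-hand side. A natural strategy is to match coefficients of $z^{n+1}$ on both sides, using
\[
\left(\frac{1+z}{1-z}\right)^x \;=\; \sum_{k \ge 0} 2^k \binom{x}{k} \frac{z^k}{(1-z)^k}
\]
on the right, thereby reducing the lemma to a finite polynomial identity in $x$ for each fixed $n$, which can be verified term-by-term from the explicit hook expression for $P(n,x)$ obtained in the previous step.
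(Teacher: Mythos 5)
Your skeleton is essentially the paper's: both arguments expand the cycle-counting sum over irreducible characters of $S_{2n}$, use the Murnaghan--Nakayama rule on $\chi^\lambda([2n])$ to kill every shape except the hooks, evaluate the hook characters at $[2^n]$ by iterating Murnaghan--Nakayama, and then match the resulting finite sum against the expansion of $((1+z)/(1-z))^x$. Your one genuine variation is the Jucys--Murphy factorization $\sum_\sigma x^{c(\sigma)}\sigma=\prod_k(x+J_k)$, which makes the class-sum eigenvalue the content polynomial $\prod_{(i,j)\in\lambda}(x+j-i)$. The paper instead recognizes the quantity $\frac{1}{(2n)!}\sum_\pi N^{\sum_i\pi_i}\chi^Y(\pi)$ as the principal specialization $s_Y(1,\dots,1)$ in $N$ variables and computes it as $\binom{N+q}{2n}\binom{2n-1}{q}$ by counting semistandard tableaux; since $s_\lambda(1^N)=\prod_{(i,j)}(N+j-i)/\prod_{(i,j)} h(i,j)$, these are the same object. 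Your version has the advantage of being a polynomial identity in an indeterminate $x$ from the outset, which would let you skip the paper's closing step (a recursion argument extending the identity from integers $N\ge 2n$ to an indeterminate). That is a real, if modest, streamlining.

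There are, however, two concrete problems. First, a normalization error: $P(n,x)$ is a sum over chord diagrams, which correspond to involutions $\iota\in[2^n]$ for one \emph{fixed} $2n$-cycle $\tau$, not to pairs $(\tau,\iota)$; your double sum over $\tau\in[2n]$ overcounts by $|[2n]|=(2n-1)!$. (Proposition~\ref{isoprop} is loosely worded on this point, but the paper's proof of the lemma correctly fixes $\tau$.) With $\tau$ fixed the prefactor becomes $|[2^n]|/(2n)!=(2n-1)!!/(2n)!$, which is exactly what makes the normalization $P(n,x)/(2n-1)!!$ in the statement come out cleanly. Second, and more seriously, the step you defer is the actual content of the lemma. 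After the hook reduction one must prove, for \emph{every} $n$, the polynomial identity $2P(n,x)/(2n-1)!!=\sum_{k\ge1}2^k\binom{x}{k}\binom{n}{k-1}$; saying it ``can be verified term-by-term for each fixed $n$'' is not an argument valid for all $n$. The paper closes this gap by pairing the even and odd hooks into the combination $\binom{N+2n-2j-1}{2n}+\binom{N+2n-2j-2}{2n}$ and evaluating the alternating sum over $j$ by a residue computation, $\frac{1}{2\pi i}\oint\frac{(1+x)^N}{x^N}(1+2x)^n\,dx$, followed by the substitution $z=(1+2x)^{-1}$. Some such uniform evaluation (residues, a Vandermonde--Chu identity, or an induction on $n$) is indispensable; without it the proof is incomplete.
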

%%%
%%%%%%%%%%%%%%%%%%%%%%%%%%%%%%%%%%%%%%%%%%%%%%%%%%%%%%%%%%%%%%%%%%%%%%%%%
%%%
\begin{proof}
Since the number of boundary components $r=n+1-2g$
of the fatgraph corresponding via Proposition \ref{isoprop} to a pair $\tau,\iota$ of permutations
equals the number of
disjoint cycles comprising $\tau\circ\iota$, we compute

$$\aligned
P(n,N)  =  \sum_{\{g\mid  2g\le n\}} \mathbf{c}_g(n) \, N^{n+1-2g}
  &=  \sum_{\iota\in [2^n]} N^{\sum_i(\tau\iota)_i}\\
  &=   \sum_{[\pi]} N^{\sum_i\pi_i}
\sum_{\iota\in [2^n]\atop \tau\iota\in [\pi]} 1\\
&= \sum_{[\pi]} N^{\sum_i\pi_i} \sum_{\sigma\in S_{2n}}
\delta_{[\sigma],[2^n]}\cdot\delta_{[\tau\sigma],[\pi]},\\
\endaligned$$
where $\tau\in [2n]$ is any fixed permutation and $\delta$ denotes the Kronecker delta function.

We claim that
\begin{equation}\label{firstclaim1}
\sum_{\sigma\in S_{2n}}
\delta_{[\sigma],[2^n]}\cdot\delta_{[\tau\sigma],[\pi]}=
\frac{(2n-1)!!}{\prod_jj^{\pi_j}\cdot \pi_j!}
\sum_{Y} \frac{\chi^Y([2^n])\chi^Y(\pi)\chi^Y([2n])}
{\chi^Y([1^{2n}])}.
\end{equation}
To this end, the orthogonality relations
\begin{eqnarray*}
\sum_Y \chi^Y(\sigma_1)\chi^Y(\sigma_2) & = &
\frac{(2n)!}{\vert [\sigma_1]\vert}\cdot
\delta_{[\sigma_1],[\sigma_2]},
\end{eqnarray*}
of the second kind give
\begin{eqnarray*}
\sum_{\sigma\in S_{2n}}
\delta_{[\sigma],[2^n]}\cdot\delta_{[\tau\sigma],[\pi]} & = &
\sum_{\sigma\in S_{2n}}\left[\frac{\vert [2^n]\vert}{(2n)!}
\sum_Y \chi^Y(\sigma)\chi^Y([2^n])\right]
\cdot \left[ \frac{\vert [\pi]\vert}{(2n)!}
\sum_{Y'} \chi^{Y'}(\tau\sigma)\chi^{Y'}(\pi)\right] \\
&=&  \frac{(2n-1)!!}{\prod_j j^{\pi_j}\cdot \pi_j!}
\sum_{Y,Y'} \chi^Y([2^n])\chi^{Y'}(\pi)
\left[\frac{1}{(2n)!}\sum_{\sigma\in S_{2n}}\chi^Y(\sigma)\chi^{Y'}
(\tau\sigma)\right].
\end{eqnarray*}
The slight variant
\begin{eqnarray*}
\frac{1}{(2n)!}\sum_{\sigma\in S_{2n}}\chi^Y(\sigma)\chi^{Y'}(\tau\sigma)=
\frac{\chi^Y(\tau)}{\chi^Y([1^{2n}])}\cdot \delta_{Y,Y'}
\end{eqnarray*}
of the orthogonality relations of the first kind
thus gives
\begin{eqnarray*}
\delta_{[\sigma],[2^n]}\cdot\delta_{[\tau\sigma],[\pi]} &=&
\frac{(2n-1)!!}{\prod_j j^{\pi_j}\cdot \pi_j!} \sum_{Y,Y'}
\chi^Y([2^n])\chi^{Y'}(\pi)\left[
\frac{\chi^{Y}([2n])}{\chi^{Y}([1^{2n}])}\cdot\delta_{Y,Y'}\right]\\
&=& \frac{(2n-1)!!}{\prod_j j^{\pi_j}\cdot \pi_j!}
\sum_{Y} \frac{\chi^Y([2^n])\chi^Y(\pi)\chi^Y([2n])}{
\chi^Y([1^{2n}])}
\end{eqnarray*}
since $\tau\in[2n]$ completing the proof of (\ref{firstclaim1}).

Plugging this into our expression for $P(n,N)$, we find
\begin{eqnarray}\label{E:erni}
P(n,N) & = & (2n-1)!!\cdot
\sum_{Y} \frac{\chi^Y([2^n])\chi^Y([2n])}{\chi^Y([1^{2n}])}
\underbrace{\frac{1}{(2n)!}\sum_{\pi\in S_{2n}}N^{\sum_i\pi_i}
\chi^Y(\pi)}_{(*)}
\end{eqnarray}
%%%%%%%%%%%%%%%%%%%%%%%%%%%%%%%%%%%%%%%%%%%%%%%%%%%%%%%%%%%%%%%%%%%%%%%%%%%%
since
$$\aligned
\sum_{[\pi]} \frac{1}{\prod_j j^{\pi_j}\cdot
\pi_j!}N^{\sum_i\pi_i}\chi^Y(\pi)&=\sum_{\pi\in
S_{2n}}\frac{1}{|[\pi]|\left(\prod_j j^{\pi_j}\cdot
\pi_j!\right)}N^{\sum_i\pi_i}
\chi^Y(\pi)\\
&=\frac{1}{(2n)!}\sum_{\pi\in S_{2n}}N^{\sum_i\pi_i}
\chi^Y(\pi).\\
\endaligned$$

%%%%%%%%%%%%%%%%%%%%%%%%%%%%%%%%%%%%%%%%%%%%%%%%%%%%%%%%%%%%%%%%%%%%%%%%%%%
Rewriting the factor
\begin{equation*}
N^{\sum_i\pi_i}=\prod_{\pi_i}\left(\sum_{h=1}^N 1^{i}\right)^{\pi_i}
\end{equation*}
as a product of power sums
$p_i(x_1,\dots,x_N)=\sum_{h=1}^N x_h^{i}$,
we identify via the Frobenius Theorem \cite{Ram-Remmel} the term $(*)$ in eq.~(\ref{E:erni}) as a
special value of  the Schur polynomial $s_{Y}(x_1,\dots,x_N)$
of $Y$ over $N\ge 2n$ indeterminates, namely,
\begin{equation}\label{E:Schur}
s_{Y}(1,\dots,1)
=
\frac{1}{(2n)!}\sum_{\pi\in S_{2n}}  \chi^{Y}(\pi)\,
\prod_{\pi_i} p_i(1,\dots,1)^{\pi_i}
=\frac{1}{(2n)!}\sum_{\pi\in S_{2n}} \prod_{\pi_i}
\left(\sum_{h=1}^N 1^{i}\right)^{\pi_i}
\, \chi^{Y}(\pi).
\end{equation}

We use the Murnaghan-Nakayama rule \cite{Sagan}
\begin{equation}\label{E:MN}
\chi^Y( ( i_1,\dots,i_m)\, \sigma ) =
\sum_{Y_\mu; \, Y \setminus Y_\mu\, \text{\rm is a} \atop
      \text{\rm skew hook of length $m$}} (-1)^{w(Y_\mu)}
                                          \chi^{Y_\mu}(\sigma)
\end{equation}
to explicitly
compute the remaining character values,
where $w(Y_\mu)$ equals the number of rows in the skew hook minus one.
Let $Y_{p,q}$ denote a $(p,q)$-hook Young diagram having a single row of
length $q+1\ge 1$ and $p$ rows of size one, where $p+q+1=2n$.
It follows that $\chi^Y(( i_1,\dots,i_n)) = (-1)^p \delta_{Y,Y_{p.q}}$
since for $\sigma= ( i_1,\dots,i_n)$ the only skew hook of
length $2n$ is a hook of length $2n$ itself.
Since $\tau\in[2n]$, this implies that only Young diagrams $Y_{p,q}$
contribute to the sum in eq.~(\ref{E:erni}), and setting
$\chi^{p,q}=\chi^{Y_{p,q}}$, we arrive at
\begin{eqnarray}\label{E:erni-pq}
P(n,N)
& = & (2n-1)!!\cdot
\sum_{0\le p,q\atop p+q=2n-1} \frac{\chi^{p,q}([2^n])
\chi^{p,q}([2n])}{\chi^{p,q}([1^{2n}])}
\frac{1}{(2n)!}\sum_{\pi\in S_{2n}}N^{\sum_i\pi_i} \chi^{p,q}(\pi).
\end{eqnarray}

Eq.~(\ref{E:MN}) furthermore implies
\begin{equation}\label{E:second}
\chi^{p,q}(( i_1,\dots,i_m) \sigma)=
\begin{cases}
 \chi^{p,q-m}(\sigma) + (-1)^{m-1}\, \chi^{p-m,q}(\sigma); &
                                              \text{\rm for $m<2n$,}\\
      (-1)^{p};                           & \text{\rm for $m=2n$,}
\end{cases}
\end{equation}
where $\chi^{p-m,q}$ and $\chi^{p,q-m}$ are zero in the respective cases $p-m<0$ and
$q-m<0$.
This recursion allows us to prove via induction
\begin{eqnarray}
\chi^{p,q}([2^n]) &= &
\begin{cases}
(-1)^{\frac{p}{2}}\binom{n-1}{\frac{p}{2}}; & \text{\rm for $p\equiv 0
\mod 2$,}\\
(-1)^{\frac{p+1}{2}}\binom{n-1}{\frac{p-1}{2}}; & \text{\rm for $p\equiv 1
\mod 2$,}
\end{cases}\\
\chi^{p,q}([1^{2n}]) & = & \binom{2n-1}{q}.
\end{eqnarray}

%%%%%%%%%%%%%%%%%%%%%%%%%%%%%%%%%%%%%%%%%%%%%%%%%%%%%%%%%%%%%%%%%%%%%%%%%%%%
Now, according to the definition of Schur polynomials, we have
\begin{equation}
s_{\lambda}(x_1,x_2,\ldots,x_n)=\sum_{T}x^{T}=\sum_{T}
x_1^{t_1}\cdots x_n^{t_n},
\end{equation}
where the summation is over all semistandard Young tableaux $T$ of
shape $\lambda$, and $t_i$ counts the occurrences of the number $i$
in $T$. Thus, $s_{p,q}(\underbrace{1,\ldots,1}_{N})$ counts the
number of semistandard Young tableaux $T$ of shape
$(q+1,\underbrace{1,\ldots,1}_{p})$ whose contents are integers
not larger than $N$, which we next compute. If the first element in the first row is $i$, where $1\leq i\leq
N-p$, then there are $\binom{N-i}{p}$ ways to arrange the numbers in the
first column. Since the first row weakly increases, the remaining $q$
elements in the first row can be chosen from $i$ to $N$ with
repetition. There are thus $\binom{N+q-i}{q}$ ways to choose  $q$
elements in the first row. We  conclude that the number of desired
semistandard Young tableaux is
\begin{equation}
s_{p,q}(1,\dots,1)=\sum_{i=1}^{N-p}\binom{N-i}{p}\binom{N+q-i}{q}
=\binom{N+q}{2n}\binom{2n-1}{q},
\end{equation}
and hence
\begin{equation}
 \frac{1}{(2n)!}
\sum_{\pi\in S_{2n}} N^{\sum_i\pi_i} \chi^{p,q}(\pi)=
\binom{N+q}{2n}\
\binom{2n-1}{q}.
\end{equation}

%%%%%%%%%%%%%%%%%%%%%%%%%%%%%%%%%%%%%%%%%%%%%%%%%%%%%%%%%%%%%%%%%%%%%%%%%%%%%%%%
Consequently, we arrive at
\begin{eqnarray*}
\frac{P(n,N)}{(2n-1)!!} & =& \sum_{j=0}^{n-1}(-1)^j\binom{n-1}{j}\left[
\binom{N+2n-2j-1}{2n}+\binom{N+2n-2j-2}{2n}\right] \\
& =& \sum_{j=0}^{n-1}(-1)^j \binom{n-1}{j}\frac{1}{2\pi i} \oint
\frac{(1+x)^{N+2n-2j-1}}{x^{N-2j}}+
\frac{(1+x)^{N+2n-2j-2}}{x^{N-2j-1}}dx\\
& =& \frac{1}{2\pi i} \oint \frac{(1+x)^N}{x^N}\left( 1+2x\right)
\sum_{j=0}^{n-1}(-1)^j\binom{n-1}{j}x^{2j}(1+x)^{2n-2j-2}dx \\
 & = & \frac{1}{2\pi i} \oint \frac{(1+x)^N}{x^N}\left( 1+2x\right)^ndx \\
 &= & \frac{1}{2}\frac{1}{2\pi i}
        \oint \frac{1}{z^{n+2}}\left(\frac{1+z}{1-z}\right)^Ndz,
\end{eqnarray*}
where $z=(1+2x)^{-1}$.
We have thus proved
$2\frac{P(n,N)}{(2n-1)!!}=[z^{n+1}]\left(\frac{1+z}{1-z}\right)^N$,
or equivalently
\begin{equation*}
1+ 2\sum_{n=0}^{\infty}\frac{P(n,N)}{(2n-1)!!}z^{n+1} =
\left(\frac{1+z}{1-z}\right)^N.
\end{equation*}
%%%
%%%%%%%%%%%%%%%%%%%%%%%%%%%%%%%%%%%%%%%%%%%%%%%%%%%%%%%%%%%%%%%%%%%%%%%%%
%%%

For convenience, we now introduce
$$p(n,N)={{P(n,N)}\over{(2n-1)!!}},$$
and claim
that
\begin{equation}\label{E:11}
1+ 2\sum_{n=0}^{\infty}p(n,x)\, z^{n+1} =
\left(\frac{1+z}{1-z}\right)^x
\end{equation}
for an integral indeterminate $x$, i.e., we now drop the restriction
that $N\geq 2n$.
%%%
%%%%%%%%%%%%%%%%%%%%%%%%%%%%%%%%%%%%%%%%%%%%%%%%%%%%%%%%%%%%%%%%%%%%%%%%%
%%%
Of course, $p(n,x)$ is determined by $\{p(n,N)\mid N\ge 2n\}$ since
any two polynomials  that coincide on an arbitrarily
large set must be identical. The identity
\begin{equation*}
\left(\frac{1+z}{1-z}\right)^N=(1+2z+2z^2+2z^3+\dots)
                                        \left(\frac{1+z}{1-z}\right)^{N-1}
\end{equation*}
gives
\begin{equation*}
p(n,N)=1+p(n,N-1)+2\bigl (
p(n-1,N-1)+p(n-2,N-1)+\cdots +p(1,N-1)
\bigr ),
\end{equation*}
whence
\begin{equation*}
p(n,N)-p(n-1,N)=p(n,N-1)-p(n-1,N-1)+2p(n-1,N-1),
\end{equation*}
from which it follows that
\begin{eqnarray*}
p(n,N) & = & p(n,N-1)+p(n-1,N)+p(n-1,N-1),~{\rm for~all}~N\geq 2n,
\end{eqnarray*}
where $p(0,N)=N$ and $p(n,0)=0$, for $n,N\ge 0$.  This recursion guarantees the
identity of polynomials
\begin{eqnarray*}
    p(n,x) & = & p(n,x-1)+p(n-1,x)+p(n-1,x-1),
\end{eqnarray*}
where $p(0,x)=x$ and $p(n,0)=0$, for $n\ge 0$. At the same time,
\begin{equation*}
\left(\frac{1+z}{1-z}\right)^x = 1+ 2\sum_{n=0}^{\infty}b(n,x)\, z^{n+1}
\end{equation*}
gives $b(n,x) = b(n,x-1)+b(n-1,x)+b(n-1,x-1)$, where $b(0,x)=x$
and $b(n,0)=0$, for $n\ge 0$. Thus, the two sides of eq.~(\ref{E:11})
satisfy the same recursions and initial conditions, guaranteeing their
equality, as was claimed, completing the proof of the lemma.
\end{proof}

\begin{lemma}\label{L:recursion}
The ${\bf c}_g(n)$ satisfy the recursion
\begin{eqnarray}\label{E:recursion}
(n+1)\, \mathbf{c}_g(n) & = &  2(2n-1)\,\mathbf{c}_g(n-1)+
                          (2n-1)(n-1)(2n-3)\,\mathbf{c}_{g-1}(n-2),
\end{eqnarray}
where $\mathbf{c}_g(n)=0$ for $2g>n$.
\end{lemma}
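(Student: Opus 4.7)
The plan is to extract the recursion from the generating function identity in Lemma \ref{L:claim1} by means of a first-order linear PDE that the right-hand side satisfies in $z$.

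First, I would set $T(z,x) = \left(\frac{1+z}{1-z}\right)^x$ and observe the differential identity
$$(1-z^2)\,\frac{\partial T}{\partial z} \;=\; 2x\,T,$$
which follows by a direct computation of $\partial_z T$. Then, writing $p(n,x) = P(n,x)/(2n-1)!!$, so that Lemma \ref{L:claim1} reads $T(z,x) = 1 + 2\sum_{n\geq 0} p(n,x)\, z^{n+1}$, I would substitute this expansion into $(1-z^2)\partial_z T = 2xT$ and match coefficients of $z^n$. The coefficient of $z^0$ yields $p(0,x)=x$, and for $n\geq 2$ the matching produces
$$(n+1)\,p(n,x) \;=\; 2x\,p(n-1,x) + (n-1)\,p(n-2,x).$$

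Next, I would clear the double factorials: multiplying by $(2n-1)!!$ and using $(2n-1)!! = (2n-1)(2n-3)!!$ and $(2n-1)!! = (2n-1)(2n-3)(2n-5)!!$, the recursion becomes
$$(n+1)\,P(n,x) \;=\; 2(2n-1)\,x\,P(n-1,x) + (n-1)(2n-1)(2n-3)\,P(n-2,x).$$
Finally, I would extract the coefficient of $x^{n+1-2g}$ on both sides. On the left this is $(n+1)\mathbf{c}_g(n)$; on the right, $2(2n-1)x\,P(n-1,x) = 2(2n-1)\sum_g \mathbf{c}_g(n-1)\,x^{n-2g+1}$ contributes $2(2n-1)\mathbf{c}_g(n-1)$, while $P(n-2,x) = \sum_g \mathbf{c}_g(n-2)\,x^{n-1-2g}$ contributes (after matching $n-1-2g' = n+1-2g$, i.e.\ $g' = g-1$) exactly $(n-1)(2n-1)(2n-3)\,\mathbf{c}_{g-1}(n-2)$. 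Combining these yields the claimed recursion, and the boundary convention $\mathbf{c}_g(n)=0$ for $2g>n$ is built into the definition of $P(n,x)$.

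The only real obstacle is bookkeeping: one must be careful with the shift between $P(n,x)$ and $p(n,x)$ so that the $(2n-1)!!$ factors regroup into the correct coefficients $2(2n-1)$ and $(n-1)(2n-1)(2n-3)$, and one must correctly match the exponent of $x$ when reading off the genus-$g$ coefficient from the $P(n-2,x)$ term. No new combinatorial input is required beyond Lemma \ref{L:claim1}.
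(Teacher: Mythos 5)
Your proof is correct and follows essentially the same route as the paper: both differentiate the Harer--Zagier identity of Lemma \ref{L:claim1} in $z$, use $\partial_z T = \frac{2x}{1-z^2}T$, and extract coefficients of $z^n$ and then of $x^{n+1-2g}$. The only (cosmetic) difference is that you clear the factor $(1-z^2)$ before matching coefficients, obtaining the three-term recursion for $p(n,x)$ directly, whereas the paper expands $\frac{1}{1-z^2}$ as a geometric series and then telescopes the resulting sum over $j$ by subtracting the identity at $(n-2,g-1)$ --- your bookkeeping is slightly cleaner but the substance is identical.
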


\begin{proof}
On the one hand by definition,
$$\aligned
{\partial\over{\partial z}} \sum_{n\geq 0} p(n,x)z^{n+1}&=\sum_{n\geq 0} (n+1) p(n,x)z^n\\
&=\sum_{n\geq 0} (n+1)\sum_{2g\leq n} {{{\bf c}_g(n)x^{n+1-2g}}\over{(2n-1)!!}} z^n,\\
\endaligned$$
so the coefficient of $x^{n+1-2g}z^n$ is ${{(n+1){\bf c}_g(n)}\over{(2n-1)!!}}$.
On the other hand by eq.~(\ref{E:11}),
$$\aligned
{\partial\over{\partial z}} \sum_{n\geq 0} p(n,x)z^{n+1}&={x\over{1-z^2}} \left(\frac{1+z}{1-z}\right)^x\\
&={x\over{1-z^2}}\bigl (
1+2\sum_{n\geq 0} p(n,x)z^{n+1}
\bigr )\\
&=x\bigl (
1+\sum_{n\geq 0}\sum_{2g\leq n} {{{\bf c}_g(x)}\over{(2n-1)!!}}z^{n+1}
\bigr )~\sum_{j\geq 0} z^{2j}\\
\endaligned$$
has $2\sum_{j=0}^g {{{\bf c}_{g-j}(n-1-2j)}\over{(2(n-1-2j)-1)!!}}$
as its coefficient of $x^{n+1-2g}z^g$.  Equating these two coefficients, we obtain
\begin{eqnarray*}
                   \frac{(n+1){\bf c}_g(n)}{(2n-1)!!} =
2\sum_{j=0}^{g}\frac{{\bf c}_{g-j}(n-1-2j)}{\left(2(n-1-2j)-1\right)!!},
\end{eqnarray*}
and hence
\begin{eqnarray*}
&& \frac{(n+1){\bf c}_g(n)}{(2n-1)!!}-
   \frac{(n-1){\bf c}_{g-1}(n-2)}{(2(n-2)-1)!!}\\
&=&
2\sum_{j=0}^{g}\frac{{\bf c}_{g-j}(n-1-2j)}
{\left(2(n-1-2j)-1\right)!!}-2\sum_{j=0}^{g-1}\frac{{\bf c}_{g-1-j}(n-3-2j)}
{\left(2(n-3-2j)-1\right)!!}\\
&=&2\frac{{\bf c}_{g}(n-1)}
{\left(2(n-1)-1\right)!!},
\end{eqnarray*}
as required.
\end{proof}

%%%
%%%%%%%%%%%%%%%%%%%%%%%%%%%%%%%%%%%%%%%%%%%%%%%%%%%%%%%%%%%%%%%%%%%%%%%%%
%%%
\begin{theorem}\label{E:GF}
For any $g\ge 1$ the generating function ${\bf C}_g(z)=
\sum_{n\ge 0}\mathbf{c}_g(n)z^n$ is given by
\begin{eqnarray}\label{E:it}
\mathbf{C}_g(z) = \, P_g(z)\frac{\sqrt {1-4\,z}}{(1-4z)^{3g}},
\end{eqnarray}
where $P_g(z)$ is a polynomial with integral coefficients of degree at 
most $(3g-1)$, $P_g(1/4)\neq 0$, $[z^{2g}]P_g(z)\neq 0$ and 
$[z^h]P_g(z)=0$ for $0\leq h\leq 2g-1$.
In particular, ${\bf C}_g(z)$ is algebraic over $\mathbb{C}(z)$ and has
its unique singularity at $z=1/4$ independent of genus. 
Furthermore, the coefficients of ${\bf C}_g(z)$ have the asymptotics
\begin{equation}
[z^n]{\bf C}_g(z)\sim \frac{P_g(\frac{1}{4})}{\Gamma(3g-1/2)} n^{3g-\frac{3}{2}} 4^n.
\end{equation}
\end{theorem}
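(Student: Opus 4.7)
The plan is to convert Lemma~\ref{L:recursion} into a first-order linear ODE for $\mathbf{C}_g(z)$ and induct on $g\geq 1$ with the ansatz $\mathbf{C}_g(z) = P_g(z)(1-4z)^{1/2-3g}$, from which algebraicity and the asymptotic estimate will both follow. Multiplying the recursion by $z^n$, summing over $n\geq 0$, and encoding the linear factors via $\theta := z\,d/dz$ yields
\begin{equation*}
(1-2z)\,\mathbf{C}_g(z) + z(1-4z)\,\mathbf{C}_g'(z) \;=\; z^2\, \mathcal{D}\, \mathbf{C}_{g-1}(z), \qquad \mathcal{D} := (2\theta+3)(\theta+1)(2\theta+1).
\end{equation*}
The base case $g=1$ is handled by substituting $\mathbf{C}_0(z) = (1-\sqrt{1-4z})/(2z)$, computing $\mathcal{D}\mathbf{C}_0$ explicitly, and integrating with the boundary condition $\mathbf{C}_1(0)=0$ to get $\mathbf{C}_1(z) = z^2(1-4z)^{-5/2}$, so that $P_1(z) = z^2$ verifies all claimed properties.

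For the inductive step, I substitute $\mathbf{C}_g = P_g(z)(1-4z)^{1/2-3g}$ and $\mathbf{C}_{g-1} = P_{g-1}(z)(1-4z)^{7/2-3g}$. Each factor $(a\theta+c)$ of $\mathcal{D}$ sends $P(z)(1-4z)^\alpha \mapsto \tilde{P}(z)(1-4z)^{\alpha-1}$ with $\tilde{P}$ polynomial, so $\mathcal{D}[P_{g-1}(z)(1-4z)^{7/2-3g}] = R_{g-1}(z)(1-4z)^{1/2-3g}$ for some polynomial $R_{g-1}$ with integer coefficients inherited from $P_{g-1}$. Factoring out the common $(1-4z)^{1/2-3g}$ reduces the ODE to
\begin{equation*}
z(1-4z)\,P_g'(z) + \bigl(1+(12g-4)z\bigr)P_g(z) \;=\; z^2 R_{g-1}(z).
\end{equation*}
Writing $P_g = \sum a_k z^k$ and $z^2 R_{g-1} = \sum b_k z^k$, coefficient extraction gives the scalar recursion $(j+1)\,a_j - 4(j-3g)\,a_{j-1} = b_j$, with $a_0=0$ forced by $b_0=0$. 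Integrality of $P_g$ is immediate from the formal identity
\begin{equation*}
P_g(z) \;=\; (1-4z)^{3g}\,\mathbf{C}_g(z)\cdot\sum_{n\geq 0}\binom{2n}{n}z^n,
\end{equation*}
each of whose three factors has integer coefficients.

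The low-order vanishing and non-vanishing drop out of the recursion. Since $\mathcal{D}$ acts diagonally on monomials via $\mathcal{D}[z^k] = (2k+1)(k+1)(2k+3)z^k$, the inductive divisibility $z^{2g-2}\mid P_{g-1}$ propagates to $z^{2g}\mid z^2 R_{g-1}$, forcing $a_j=0$ for $j<2g$; and $b_{2g} = (4g-3)(2g-1)(4g-1)\cdot[z^{2g-2}]P_{g-1}$ is nonzero, giving $a_{2g}\neq 0$. Non-vanishing of $P_g(1/4)$ is established by evaluating the reduced ODE at $z=1/4$: the derivative term drops out, leaving $3g\cdot P_g(1/4) = \tfrac{1}{16}R_{g-1}(1/4)$, and a direct computation using the action of $\theta$ on $(1-4z)^\beta$ at $z=1/4$ gives $R_{g-1}(1/4) = -4(7/2-3g)(5/2-3g)(3/2-3g)P_{g-1}(1/4)$, nonzero by induction.

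The main obstacle is the degree bound $\deg P_g\leq 3g-1$, equivalently $a_j=0$ for all $j\geq 3g$. Naive degree tracking of $\mathcal{D}$ gives only $\deg R_{g-1}\leq\deg P_{g-1}+3\leq 3g-1$, which would permit $a_{3g} = b_{3g}/(3g+1)\neq 0$ since the multiplier $4(j-3g)$ vanishes precisely at $j=3g$. The improvement comes from cancellations in each factor $(a\theta+c)$: acting on $P(z)(1-4z)^\alpha$ with $\deg P = d$, the coefficient of $z^{d+1}$ in the output polynomial is proportional to $a(d+\alpha)+c$ and so vanishes for specific $(d,\alpha)$. Tracking the three applications of $\mathcal{D}$ with $\alpha = 7/2-3g$, the critical vanishings occur in the first factor $(2\theta+1)$ at $d = 3g-4$ and in the third factor $(2\theta+3)$ at $d = 3g-3$; a short case analysis across all $\deg P_{g-1}\leq 3g-4$ then shows $\deg R_{g-1}\leq 3g-3$, hence $\deg(z^2R_{g-1})\leq 3g-1$, and the recursion forces $a_j=0$ for $j\geq 3g$. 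Algebraicity of $\mathbf{C}_g$ is now immediate from its closed form; applying the Flajolet--Odlyzko transfer theorem at the unique dominant singularity $z=1/4$ with local behavior $\mathbf{C}_g(z)\sim P_g(1/4)(1-4z)^{1/2-3g}$ then produces the claimed asymptotic estimate.
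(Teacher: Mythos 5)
Your proposal is correct, and at the strategic level it coincides with the paper's proof: both convert the recursion of Lemma~\ref{L:recursion} into the first-order linear ODE $z(1-4z)\mathbf{C}_g'+(1-2z)\mathbf{C}_g=\Phi_{g-1}$ (your operator $\mathcal{D}=(2\theta+3)(\theta+1)(2\theta+1)=4\theta^3+12\theta^2+11\theta+3$ is exactly the paper's $4z^3\frac{d^3}{dz^3}+24z^2\frac{d^2}{dz^2}+27z\frac{d}{dz}+3$ rewritten via $z^kD^k=\theta(\theta-1)\cdots(\theta-k+1)$), treat $g=1$ as the base case, and prove integrality from $P_g=\mathbf{C}_g(1-4z)^{3g-1/2}$ being a product of integral series. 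Where you genuinely diverge is the mechanics of the inductive step. The paper integrates the ODE explicitly: it writes $\Phi_g=Q_g(z)(1-4z)^{-(3g+5/2)}$, expands $Q_g/(1-4z)^{3g+4}$ in partial fractions with coefficients $A_j$, integrates term by term, and then verifies the degree bound, the low-order vanishing, and $P_{g+1}(1/4)\neq 0$ through identities among the $A_j$. You instead substitute the ansatz directly, reducing everything to the single polynomial ODE $z(1-4z)P_g'+(1+(12g-4)z)P_g=z^2R_{g-1}$ and the two-term scalar recursion $(j+1)a_j-4(j-3g)a_{j-1}=b_j$, from which the vanishing for $j<2g$, the non-vanishing at $j=2g$ (via $b_{2g}=(4g-3)(2g-1)(4g-1)[z^{2g-2}]P_{g-1}$), and $P_g(1/4)\neq 0$ (by evaluating at $z=1/4$, where the derivative term dies) all drop out in a line or two each; I checked these, including the value $R_{g-1}(1/4)=-4(7/2-3g)(5/2-3g)(3/2-3g)P_{g-1}(1/4)$, against $P_2(1/4)=105/1024$, and they are consistent. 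You also correctly identify that the only delicate point is the degree bound $\deg R_{g-1}\leq 3g-3$ (needed so that $b_j=0$ for $j\geq 3g$, where the multiplier $4(j-3g)$ can no longer kill $a_j$), and your factor-by-factor analysis of the top-degree coefficient $-4p_d(a(d+\alpha)+c)$ — vanishing in the first factor at $d=3g-4$ and in the third at $d=3g-3$ — does yield the required two-degree gain; this is the exact analogue of the paper's verification that $[z^{3g+3}]Q_g=[z^{3g+4}]Q_g=0$, but localized to the factored operator, which makes the cancellation less of a brute-force computation. Your route buys a shorter and more transparent bookkeeping (a single recursion carries all four properties of $P_g$), at the cost of having to argue separately that the power-series solution of the reduced ODE actually terminates; the paper's integration route produces $P_{g+1}$ as an explicit finite expression from the outset, so polynomiality is manifest, but the price is the heavier partial-fraction identities \eqref{E:Aj1}--\eqref{E:Aj2}. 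The transfer-theorem step for the asymptotics is standard and fine (the paper does not spell it out either).
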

%%%
%%%%%%%%%%%%%%%%%%%%%%%%%%%%%%%%%%%%%%%%%%%%%%%%%%%%%%%%%%%%%%%%%%%%%%%%%
%%%
\begin{proof}
%%%
%%%%%%%%%%%%%%%%%%%%%%%%%%%%%%%%%%%%%%%%%%%%%%%%%%%%%%%%%%%%%%%%%%%%%%%%%
%%%

The recursion eq.~(\ref{E:recursion}) is equivalent to the ODE
\begin{eqnarray}\label{E:ODE}
z(1-4z)\frac{d}{dz}\mathbf{C}_g(z) +(1-2z)\mathbf{C}_g(z) & = &
\Phi_{g-1}(z),
\end{eqnarray}
where
\begin{eqnarray*}
\Phi_{g-1}(z) =  z^2\left(4z^3\frac{d^3}{dz^3}\mathbf{C}_{g-1}(z) +
24z^2 \frac{d^2}{dz^2}\mathbf{C}_{g-1}(z) + 27z\frac{d}{dz}
\mathbf{C}_{g-1}(z)+3\mathbf{C}_{g-1}(z)\right)
\end{eqnarray*}
with initial condition $\mathbf{C}_g(0)=0$
since $r=n+1-2g$ has no positive solution $r>0$ for $n<2g$.

%%%%%%%%%%%%%%%%%%%%%%%%%%%%%%%%%%%%%%%%%%%%%%%%%%%%

We prove the theorem by induction on $g$. For the basis step
$g=1$,  we have the generating function ${\bf C}_0(z)=2\, \left( 1+\sqrt {1-4\,z}
\right) ^{-1}$ for the Catalan numbers.
The Picard-Lindel\"of Theorem \cite{Ince} guarantees the unique
solution to eq.~(\ref{E:ODE}), satisfying ${\bf C}_1(0)=0$,
given by
\begin{equation}
{\bf C}_1(z)={\frac {{z}^{2}}{\left( 1-4\,z \right)^{3}}}
              \sqrt {1-4\,z}.
\end{equation}

For the inductive step, the induction hypothesis gives that for any
$j\leq g$, we have
\begin{equation*}
{\bf C}_j(z)=\frac{P_j(z)}{(1-4z)^{3j}}\,\sqrt{1-4z},
\end{equation*}
where $P_j(x)$ is an integral polynomial of degree at most $3j-1$, $P_j(1/4)\neq 0$,
$[z^{2j}]P_j(z)\neq0$, and $[z^h]P_j(z)=0$ for $0\leq h\leq 2j-1$.
The general solution of eq.~(\ref{E:ODE}) is
\begin{equation}\label{E:intsoln}
{\bf C}_{g+1}(z)=\,\left(\int_0^z\frac{\Phi_{g}(y)}{(1-4y)^{3/2}}dy+C
\right)\,\frac{\sqrt{1-4z}}{z},
\end{equation}
where
\begin{eqnarray*}
\Phi_{g}(z) & = &
4z^5\frac{{d}^3 }{{d} z^3}{\bf C}_{g}(z)+
24z^4\frac{{d}^2 }{{d} z^2}{\bf C}_{g}(z)+
27z^3\frac{{d} }{{d}z}{\bf C}_{g}(z)+3z^2{\bf C}_{g}(z) \\
& = & \frac{Q_g(z)}{(1-4z)^{3g+5/2}}.
\end{eqnarray*}

We claim that\\\\
$\bullet$~$Q_g(z)$ is a polynomial of degree at most $3g+2$,\\
$\bullet$~ $Q_g(1/4)\neq 0$, and\\
$\bullet$~ $[z^{2g+2}]Q_g(z)\neq 0$ and $[z^h]Q_g(z)=0$ if $0\leq h\leq 2g+1$.\\\\
To these ends and by the inductive hypothesis ${\bf C}_{g}(z)=P_{g}(z)/(1-4z)^{3g-1/2}$, we have
$$\aligned
\frac{\mathrm{d} {\bf C}_{g}(z)}{\mathrm{d}
z}&=\frac{P_{1g}(z)}{(1-4z)^{3g+1/2}},~{\rm where}~P_{1g}(z)=(1-4z)P_{g}'(z)+(12g-2)P_{g}(z),\\
\quad \frac{\mathrm{d}^2 {\bf
C}_{g}(z)}{\mathrm{d} z^2}&=\frac{P_{2g}(z)}{(1-4z)^{3g+3/2}},~{\rm where}~P_{2g}(z)=(1-4z)P_{1g}'(z)+(12g+2)P_{1g}(z), \\
\quad
\frac{\mathrm{d}^3 {\bf C}_{g}(z)}{\mathrm{d}
z^3}&=\frac{P_{3g}(z)}{(1-4z)^{3g+5/2}},~{\rm where}~P_{3g}(z)=(1-4z)P_{2g}'(z)+(12g+6)P_{2g}(z).\\
\endaligned$$
Thus,
\begin{equation}\label{E:Q}
Q_g(z)=4z^5P_{3g}(z)+24z^4(1-4z)P_{2g}(z)
+27z^3(1-4z)^2P_{1g}(z)+3z^2(1-4z)^3P_{g}(z).
\end{equation}

To see that $Q_g(z)$ is
indeed a polynomial of degree at most $3g+2$, first note that
$P_{g}(z)$ and each $P_{ig}(z)$, for $1\leq i\leq 3$, are each polynomials of
degree at most $3g-1$, so the degree of $Q_g(z)$ is at most
$3g+4$. We shall compute the coefficients $[z^{3g+3}]Q_g(z)$
and $[z^{3g+4}]Q_g(z)$ in terms of $d_{3g-2}=[z^{3g-2}]P_{g}(z)$
and $d_{3g-1}=[z^{3g-1}]P_{g}(z)$, where
we find
\begin{eqnarray*}
&&[z^{3g-1}]P_{1g}(z)= 2d_{3g-1},\\
&&[z^{3g-2}]P_{1g}(z)=(3g-1)d_{3g-1}+6d_{3g-2},\\\\
&&[z^{3g-1}]P_{2g}(z)=6[z^{3g-1}]P_{1g}(z)=12d_{3g-1}, \\
&&[z^{3g-2}]P_{2g}(z)=(3g-1)[z^{3g-1}]P_{1g}(z)
+10[z^{3g-2}]P_{1g}(z)\\
&&\qquad\qquad\qquad\ \hskip -.75ex=12(3g-1)d_{3g-1}+60d_{3g-2},\\\\
&&[z^{3g-1}]P_{3g}(z)=10[z^{3g-1}]P_{2g}(z)=120d_{3g-1},
\\ &&[z^{3g-2}]P_{3g}(z)=(3g-1)[z^{3g-1}]P_{2g}(z)
+14[z^{3g-2}]P_{2g}(z)\\
&&\qquad\qquad\qquad\ \hskip -.75ex=180(3g-1)d_{3g-1}+840d_{3g-2},\\
\end{eqnarray*}
Plugging these into eq.~(\ref{E:Q}), we compute
\begin{eqnarray*}
[z^{3g+4}]Q_g(z)&=&4[z^{3g-1}]P_{3g}(z)-24\times4\times[z^{3g-1}]
P_{2g}(z)\\
&&+27\times(-4)^2\times[z^{3g-1}]P_{1g}(z)+3\times(-4)^3\times
d_{3g-1}\\
&&\hskip -4ex =0,
\end{eqnarray*}
and
\begin{eqnarray*}
[z^{3g+3}]Q_g(z)&=&4[z^{3g-2}]P_{3g}(z)+24[z^{3g-1}]P_{2g}(z)
-24\times4\times[z^{3g-2}]
P_{2g}(z)\\
&&+27\times(-8)\times[z^{3g-1}]P_{1g}(z)
+27\times(-4)^2\times[z^{3g-2}]P_{1g}(z)\\
&&+3\times3\times(-4)^2\times
d_{3g-1}+3\times(-4)^3\times
d_{3g-2}\\
&=&4\times\left(180(3g-1)\,d_{3g-1}+840\,d_{3g-2}\right)
+24\times12\,d_{3g-1}\\
&&-96\times\left(12(3g-1)\,d_{3g-1}+60\,d_{3g-2}\right)\\
&&-27\times16\,d_{3g-1}+27\times16\times
\left((3g-1)\,d_{3g-1}+6\,d_{3g-2}\right)\\
&&+144\,d_{3g-1}-192\,d_{3g-2}\\
&=&0.
\end{eqnarray*}

Furthermore, insofar as $P_{g}(1/4)\neq 0$, we have $P_{ig}(1/4)\neq 0$, for $1\leq i\leq 3$,
and hence also $Q_g(1/4)\neq 0$ from eq.~(\ref{E:Q}) as was claimed.

We finally show
\begin{equation*}
[z^{2g+2}] Q_g(z)  \neq  0 \quad \text{\rm and}\quad
[{z^{h}}]      Q_g(z)  =     0, \quad\text{\rm for $0\leq h\leq 2g+1$}.
\end{equation*}
By the induction hypothesis, we have
$[z^{2g}]P_{g}(z)\neq 0$ and $[z^h]P_{g}(z)=0,$
for $0\leq h<2g$.
By definition of $P_{1g},P_{2g},P_{3g}$, we have, for $g\geq 2$,
\begin{eqnarray*}
&&[z^{2g-1}]P_{1g}(z)=2g[z^{2g}]P_{g}(z),\\
&&[z^{2g-2}]P_{2g}(z)=(2g-1)[z^{2g-1}]P_{1g}(z)= (2g-1)2g[z^{2g}]P_{g}(z)\\
&&[z^{2g-3}]P_{3g}(z)=(2g-2)[z^{2g-2}]P_{2g}(z)=(2g-2)(2g-1)2g[z^{2g}]P_{g}(z),
\end{eqnarray*}
and consequently conclude
\begin{eqnarray*}
[z^{2g+2}]Q_g(z) &=& 4[z^{2g-3}]P_{3g}(z)+24[z^{2g-2}]P_{2g}(z)
 +27[z^{2g-1}]P_{1g}(z)+3[z^{2g}]P_{g}(z)\\
 &\neq& 0
\end{eqnarray*}
as was asserted.

We proceed by extracting the following coefficients:
\begin{eqnarray*}
&&[z^h]P_{1g}(z)=(h+1)[z^{h+1}]P_{g}(z)-4h[z^{h}]P_{g}(z)
+(12g-2)[z^h]P_{g}(z)\\
&&\hskip 1.5ex\qquad\qquad=0,{~\rm for}~0\leq h<2g-1,\\
&&[z^h]P_{2g}(z)=(h+1)[z^{h+1}]P_{1g}(z)
-4h[z^{h}]P_{1g}(z)+(12g+2)[z^h]P_{1g}(z)\\
&&\hskip 1.5ex\qquad\qquad=0,{~\rm for}~0\leq h<2g-2,\\
&&[z^h]P_{3g}(z)=(h+1)[z^{h+1}]P_{2g}(z)-4h[z^{h}]P_{2g}(z)
+(12g+6)[z^h]P_{2g}(z)\\
&&\hskip 1.5ex\qquad\qquad=0,{~\rm for}~0\leq h<2g-3.
\end{eqnarray*}
For $0\leq h\leq 2g+1$, we therefore conclude
\begin{eqnarray*} [z^h]Q_g(z)&=&4[z^{h-5}]P_{3g}(z)+24[z^{h-4}]P_{2g}(z)
-96[z^{h-5}]P_{2g}(z)\\
&&+27[z^{h-3}]P_{1g}(z)-27\times8[z^{h-4}]P_{1g}(z)
+27\times16[z^{h-5}]P_{1g}(z)\\
&&+3[z^{h-2}]P_{g}(z)-36[z^{h-3}]P_{g}(z)+144[z^{h-4}]P_{g}(z)
-192[z^{h-5}]P_{g}(z)\\
&=&0\\
\end{eqnarray*}
as required, completing the verifications of our assertions
about $Q_g$.

Now, since $Q_g(z)$ is a polynomial of degree at most $3g+2$, the partial fraction expansion is
given by
\begin{equation}\label{E:pfrac}
\frac{Q_g(z)}{\left( 1-4\,z \right) ^{(3g+4)}} =
               \sum_{j= 2}^{3g+4}\frac{A_j}{\left( 1-4\,z \right) ^{j}},
\end{equation}
where the $A_j\in \mathbb{Q}$. In light of
eq.~(\ref{E:intsoln}), we then compute
\begin{eqnarray*}
{\bf C}_{g+1}(z) & = &
\left(\sum_{j= 2}^{3g+4}\frac{ A_j}{4(j-1)\left( 1-4\,z \right) ^{j-1}}+
 C \right) \frac{\sqrt {1-4\,z}}{z} \\
&=& \frac{1}{z}\left(\sum_{j= 2}^{3g+4}\frac{ A_j}{4(j-1)}
\left( 1-4\,z \right)^{3g+4-j}+C\left( 1-4\,z \right)^{3g+3}\right)
\frac{\sqrt{1-4\,z }}{\left(1-4\,z \right)^{3g+3}},
\end{eqnarray*}
where the initial condition ${\bf C}_{g+1}(0)=0$ evidently guarantees
$C=\sum_{j= 2}^{3g+4} \frac{- A_j}{4(j-1)}$.

Introducing the Laurent polynomial
\begin{equation}\label{E:laurent}
P_{g+1}(z)= {{-1}\over 4z}
\left(\sum_{j= 2}^{3g+4}\frac{-A_j}{j-1}\left( 1-4\,z \right)^{3g+4-j}+
\sum_{j= 2}^{3g+4}\frac{A_j}{j-1}\left( 1-4\,z \right)^{3g+3}\right),
\end{equation}
we claim that\\\\
$\bullet$~$P_{g+1}(z)$ is a polynomial,  i.e., $[z^{-1}]P_{g+1}(z)=0$,  of 
degree at most $3g+2$,\\
$\bullet$~ $P_{g+1}(1/4)\neq 0$, and\\
$\bullet$~ $[z^{2g+2}]P_{g+1}(z)\neq 0$ and $[z^h]P_{g+1}(z)=0$ if $0\leq h\leq 2g+1$.\\\\
To these ends, we have
\begin{equation}
(-4)^{-s}[z^s]P_{g+1}(z)=
\sum_{j= 2}^{3g+4}\frac{-A_j}{j-1}\binom{3g+4-j}{s+1}+\sum_{j= 2}^{3g+4}
\frac{A_j}{j-1}\binom{3g+3}{s+1}.
\end{equation}
In particular, $[z^{-1}]P_{g+1}(z) =\sum_{j= 2}^{3g+4}\frac{ A_j}{ 4(j-1)}+
\sum_{j= 2}^{3g+4}\frac{- A_j}{4(j-1)}=0$, whence $P_{g+1}(z)$ is indeed a
polynomial of degree at most $3g+2$.  
Furthermore, we find
$P_{g+1}(1/4)=A_{3g+4}/(3g+3)\neq 0$ directly from eq.~(\ref{E:laurent}).

It remains to show by
induction on $s$ that $[z^s]P_{g+1}(z)=0$, for $0\leq s\leq 2g+1$
and $[z^{2g+2}]P_{g+1}(z)\neq 0$.
We have seen that the coefficients
$[z^{2g+2}]Q_g(z)\neq 0$ and $[z^h]Q_g(z)=0$, for $0\leq h\leq 2g+1$,
that is, from eq.~(\ref{E:pfrac}),
\begin{equation}\label{E:Aj1}
(-4)^{2g+2}~\sum_{j=2}^{g+2}\binom{3g+4-j}{2g+2}A_j\neq 0.
\end{equation}
and
\begin{equation}\label{E:Aj2}
(-4)^h~\sum_{j=2}^{3g+4-h}\binom{3g+4-j}{h}A_j=0, ~{\rm for}~ 0\leq h\leq
2g+1.
\end{equation}

It follows from eq.~(\ref{E:Aj2}) that
\begin{eqnarray*}
[z^{0}]P_{g+1}(z)&=&\sum_{j=
2}^{3g+4}\frac{-A_j}{j-1}(3g+4-j)+\sum_{j=
2}^{3g+4}\frac{A_j}{j-1}(3g+3)\\
&=&\sum_{j= 2}^{3g+4}A_j\\
&=&0,
\end{eqnarray*}
and we hence assume by induction that
$[z^s]P_{g+1}(z)=0$ for $0\leq s< 2g+1$. To compute
 $[z^{s+1}]P_{g+1}(z)$, we thus assume
  \begin{equation}\label{E:n0}
(-4)^{-s}~[z^{s}]P_{g+1}(z)=\sum_{j=
2}^{3g+4}\frac{-A_j}{j-1}\binom{3g+4-j}{s+1}+\sum_{j=
2}^{3g+4}\frac{A_j}{j-1}\binom{3g+3}{s+1}=0,
\end{equation}
i.e., $\sum_{j=
2}^{3g+4}\frac{A_j}{j-1}\binom{3g+3}{s+1}=\sum_{j=
2}^{3g+4}\frac{A_j}{j-1}\binom{3g+4-j}{s+1}$,
and compute
\begin{eqnarray*}
(-4)^{-(s+1)}~[z^{s+1}]P_{g+1}(z)&=&\sum_{j=
2}^{3g+4}\frac{-A_j}{j-1}\binom{3g+4-j}{s+2}
+\sum_{j= 2}^{3g+4}\frac{A_j}{j-1}\binom{3g+3}{s+2}\\
&=&\sum_{j=
2}^{3g+4}\frac{-A_j}{j-1}\binom{3g+4-j}{s+1}\biggl (\frac{3g+3-j-s}{s+2}\biggr )\\
&+&\sum_{j=
2}^{3g+4}\frac{A_j}{j-1}\binom{3g+3}{s+1}\biggl (\frac{3g+2-s}{s+2}\biggr ),
\end{eqnarray*}
so that
\begin{equation}
(-4)^{-(s+1)}~[z^{s+1}]P_{g+1}(z)=\sum_{j= 2}^{3g+4}{{A_j}\over {s+2}}\binom{3g+4-j}{s+1}=0,
\end{equation}
according to eq.~(\ref{E:Aj2}), completing the inductive proof that indeed the coefficients $[z^s]P_{g+1}(z)=0$
vanish, for $0\leq s\leq
2g+1$.

Similarly, using
\begin{equation}
(-4)^{-(2g+1)}~[z^{2g+1}]P_{g+1}(z)=\sum_{j=
2}^{3g+4}\frac{-A_j}{j-1}\binom{3g+4-j}{2g+2}+\sum_{j=
2}^{3g+4}\frac{A_j}{j-1}\binom{3g+3}{2g+2}=0
\end{equation}
and eq.~(\ref{E:Aj1}), it follows that the coefficient
\begin{eqnarray*}
(-4)^{-(2g+2)}[z^{2g+2}]P_{g+1}(z)&=&\sum_{j= 2}^{3g+4}\frac{-A_j}{j-1}\binom{3g+4-j}{2g+3}
+\sum_{j= 2}^{3g+4}\frac{A_j}{j-1}\binom{3g+3}{2g+3}\\
&=&\sum_{j= 2}^{3g+4}\frac{-A_j}{j-1}\binom{3g+4-j}{2g+2}
\biggl (\frac{g+2-j}{2g+3}\biggr )\\
&+&\sum_{j= 2}^{3g+4}\frac{A_j}{j-1}\binom{3g+3}{2g+2}\biggl (\frac{g+1}{2g+3}\biggr )\\
&=&\sum_{j= 2}^{3g+4}{{A_j}\over{2g+3}}\binom{3g+4-j}{2g+2}\neq 0
\end{eqnarray*}
completing the verification of our claims about $P_{g+1}$.

To complete the proof, we must finally show that $P_g(z)$ has integral coefficients, and this follows immediately
from the expression
$$
P_g(z)~=~{\bf C}_g(z) \bigl ( \sqrt{1-4z}\bigr )^{6g-1}~=~{\bf C}_g(z)\biggl(1-2z{\bf C}_0(z) \biggr )^{6g-1}
$$
since the right-hand side evidently has all integral coefficients.

\end{proof}

\begin{corollary}\label{C:BGB} We have the explicit expressions
$$\aligned
\mathbf{c}_1(n) & =  \frac{2^{n-2}(2n-1)!!}{3(n-2)!},~{\rm for}~n\geq 2,\\
\mathbf{c}_2(n) & =  \frac{2^{n-4}(5n-2)(2n-1)!!}{90 (n-4)!},~{\rm for}~n\geq 4,\\
\mathbf{c}_3(n) & =\frac{2^{n-6}(35n^2-77n+12)(2n-1)!! }{5670 (n-6)! },~{\rm for}~n \geq 6.
\endaligned$$
\end{corollary}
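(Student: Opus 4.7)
The plan is to extract coefficients directly from the closed-form expression
$$\mathbf{C}_g(z) \;=\; P_g(z)\,(1-4z)^{1/2 - 3g}$$
supplied by Theorem \ref{E:GF}, using the explicit polynomials $P_1(z) = z^2$, $P_2(z) = 21 z^4(1+z)$, and $P_3(z) = 11 z^6(158 z^2 + 558 z + 135)$ recorded in the introduction. The case $P_1 = z^2$ is already established by the base step of the proof of Theorem \ref{E:GF}; the cases $g = 2, 3$ can be verified either by running the inductive step of that proof (integration of $\Phi_g/(1-4y)^{3/2}$ followed by partial fractions) or by directly checking that the candidate closed forms satisfy the ODE (\ref{E:ODE}) together with the initial condition $\mathbf{C}_g(0) = 0$.

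The key computational tool is the generalized binomial identity
$$(1-4z)^{-(2m+1)/2} \;=\; \sum_{k \geq 0} \frac{(2m+2k-1)!!}{(2m-1)!!}\cdot\frac{2^k}{k!}\, z^k, \qquad m \geq 0,$$
obtained by simplifying $\binom{-(2m+1)/2}{k}(-4)^k$ and folding the resulting product of half-integers into a ratio of double factorials. Setting $m = 3g - 1$ and multiplying by $P_g(z)$ reduces the corollary to arithmetic bookkeeping.

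For $g=1$, the single monomial $P_1 = z^2$ together with $3!! = 3$ yields $\mathbf{c}_1(n)$ immediately. For $g=2$, the two monomials of $P_2$ produce two terms which, after applying $(2n+1)!! = (2n+1)(2n-1)!!$ and $(n-5)!^{-1} = (n-4)/(n-4)!$, share a common factor $21\cdot 2^{n-5}(2n-1)!!/[945(n-4)!]$; the remaining bracket collapses to $5n - 2$, and the reductions $21/945 = 1/45$ and $2^{n-5}/45 = 2^{n-4}/90$ give the stated form. For $g=3$, the three monomials of $P_3$ produce three terms which, pushed onto the common factor $11\cdot 2^{n-8}(2n-1)!!/[15!!\cdot(n-6)!]$ in the same way, combine into a quadratic that factors as $130(35n^2 - 77n + 12)$; the arithmetic reductions $11\cdot 130/15!! = 2/2835$ and $2\cdot 2^{n-8}/2835 = 2^{n-6}/5670$ then produce the claimed expression. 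The computation is purely algebraic and poses no genuine obstacle beyond careful bookkeeping of powers of $2$ and of the double factorials $3!!$, $9!!$, and $15!!$, which are responsible for the denominators $3$, $90$, and $5670$ respectively.
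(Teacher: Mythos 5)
Your proposal is correct, and the arithmetic checks out: the expansion $(1-4z)^{-(2m+1)/2}=\sum_{k\ge 0}\frac{(2m+2k-1)!!}{(2m-1)!!}\frac{2^k}{k!}z^k$ is the right tool, the $g=2$ bracket does collapse to $2(2n+1)+(n-4)=5n-2$, and the $g=3$ quadratic $158(n-6)(n-7)+1116(2n+1)(n-6)+540(2n+3)(2n+1)=130(35n^2-77n+12)$ together with $11\cdot 130/15!!=2/2835$ reproduces the stated constants. However, your route is genuinely different from the paper's. The paper does not expand the binomial series at all: for each $g=1,2,3$ it uses the explicit $P_g(z)$ to write down a linear ODE with polynomial coefficients satisfied by $\mathbf{C}_g(z)$ (first order for $g=1$, second for $g=2$, third for $g=3$), reads off from that ODE a two-term recursion of the form $a(n)\,\mathbf{c}_g(n)=b(n)\,\mathbf{c}_g(n-1)$ --- e.g.\ $(n-2)\mathbf{c}_1(n)=2(2n-1)\mathbf{c}_1(n-1)$ and $(5n^2-27n+28)\mathbf{c}_2(n)=(20n^2-18n+4)\mathbf{c}_2(n-1)$ --- and then solves that hypergeometric recursion in closed form. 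What the paper's method buys is that the answer emerges as a telescoping product from a single two-term recursion, which scales as a uniform ``framework for computing explicit expressions of $\mathbf{c}_g(n)$ for any $g$''; what your method buys is elementarity and transparency: each monomial $z^{2g+j}$ of $P_g$ visibly contributes a term $\text{const}\cdot 2^{n}(2n-1+2j')!!/(n-2g-j)!$, which explains a priori why $\mathbf{c}_g(n)$ has the shape $(\text{polynomial of degree }g-1\text{ in }n)\cdot 2^{n-2g}(2n-1)!!/(n-2g)!$. Both arguments rest equally on the explicit polynomials $P_1,P_2,P_3$, and your observation that these can be certified by checking the ODE (\ref{E:ODE}) with $\mathbf{C}_g(0)=0$ is a legitimate way to discharge that dependency.
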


\begin{proof}
Differentiating the formula for $\mathbf{C}_1(z)$ using the explicit expression
for $P_1(z)$ in the Introduction, we get
$(1-4z)z \mathbf{C}_1(z) -(2+2z) \mathbf{C}_1(z) = 0$ giving the recursion
$(n-2)\mathbf{c}_1(n) = 2(2n-1) \mathbf{c}_1(n-1) $, whose solution is given by
the asserted expression for ${\bf c}_1(n)$.  As to  ${\bf c}_2(n)$ and again using
the explicit expression for $P_2(z)$ in the Introduction,
observe that ${\bf C}_2(z)$ satisfies the ODE
$$5z^2(4z-1) \frac{d^2 \mathbf{C}_2(z)}{dz^2}+ 2z(21z+11)
\frac{d\mathbf{C}_2(z)}{dz}+ 2(3z-14) \mathbf{C}_2(z) = 0$$
giving the recursion $(5n^2-27n+28)\mathbf{c}_2(n) = (20n^2-18n+4) \mathbf{c}_2(n-1)$,
which gives the asserted formula. For genus $3$ we similarly get the ODE
\begin{equation*}
(140z^4-35z^3) \frac{d^3 \mathbf{C}_3(z)}{dz^3}+
(462z^3-252z^2)\frac{{d}^2 \mathbf{C}_3(z)}{dz^2}+
(48z^2-684z)\frac{d\mathbf{C}_3(z)}{ d z}- (60z-744)
\mathbf{C}_3(z) =0
\end{equation*}
which gives the recursion
$$
(35n^3-357n^2+1006n-744) \mathbf{c}_3(n)=(140n^3-378n^2+202n-24)
\mathbf{c}_3(n-1)
$$
solved by the stated formula for $\mathbf{c}_3(n)$.
\end{proof}

\begin{corollary}\label{C:CG2G}
We have the explicit expression
$${\bf c}_g(2g)=\frac{(4g)!}{4^g (2g+1)!}.$$
Furthermore, the exponential generating function 
of these numbers is given by
$$ \sum_{g=0}{\bf c}_g(2g) \frac{x^{2g}}{(2g)!}= \frac{\sqrt{1+2x}-\sqrt{1-2x}}{2x}.$$
\end{corollary}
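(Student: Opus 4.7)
The plan proceeds in two stages: first establish the closed form for $\mathbf{c}_g(2g)$ by unfolding the recursion from Lemma~\ref{L:recursion}, then verify the EGF identity by direct series expansion.

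For the closed form, I set $n = 2g$ in the recursion of Lemma~\ref{L:recursion}. Since $\mathbf{c}_g(2g-1) = 0$ (the exclusion $2g > n$ is active at $n = 2g-1$), the middle term drops out and the recursion collapses to
\begin{equation*}
(2g+1)\,\mathbf{c}_g(2g) = (4g-1)(2g-1)(4g-3)\,\mathbf{c}_{g-1}(2(g-1)).
\end{equation*}
Iterating from the base $\mathbf{c}_0(0) = 1$ gives the telescoping product
\begin{equation*}
\mathbf{c}_g(2g) = \prod_{k=1}^{g}\frac{(4k-1)(4k-3)(2k-1)}{2k+1}.
\end{equation*}
Applying the standard double-factorial identities $\prod_{k=1}^{g}(4k-1)(4k-3) = (4g-1)!! = (4g)!/[2^{2g}(2g)!]$, $\prod_{k=1}^{g}(2k-1) = (2g)!/[2^{g}g!]$ and $\prod_{k=1}^{g}(2k+1) = (2g+1)!/[2^{g}g!]$ and cancelling $(2g)!$ and $2^{g}g!$ yields exactly $(4g)!/[4^{g}(2g+1)!]$.

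For the EGF identity, I expand the generalized binomial series $(1 \pm 2x)^{1/2} = \sum_{n\geq 0}\binom{1/2}{n}(\pm 2x)^n$; subtracting the two series kills the even-index terms, and dividing by $2x$ produces
\begin{equation*}
\frac{\sqrt{1+2x}-\sqrt{1-2x}}{2x} = \sum_{g\geq 0} 2^{2g+1}\binom{1/2}{2g+1}x^{2g}.
\end{equation*}
Evaluating $\binom{1/2}{2g+1}$ directly (the $2g$ negative factors in its numerator yield an even sign) gives $(4g-1)!!/[2^{2g+1}(2g+1)!]$, so $[x^{2g}]$ of the right-hand side equals $(4g)!/[4^{g}(2g)!(2g+1)!]$. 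Multiplying by $(2g)!$ to compensate for the EGF normalization recovers precisely the closed form for $\mathbf{c}_g(2g)$ obtained in the first stage.

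The only delicate point is the sign bookkeeping in $\binom{1/2}{2g+1}$ and the clean collapse of the double factorials; both are entirely routine. No serious obstacle is anticipated, as the recursion from Lemma~\ref{L:recursion} does all of the heavy lifting at the extreme index $n=2g$.
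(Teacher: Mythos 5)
Your proposal is correct and follows essentially the same route as the paper: both derive $(2g+1)\,\mathbf{c}_g(2g)=(4g-1)(2g-1)(4g-3)\,\mathbf{c}_{g-1}(2g-2)$ from the recursion of Lemma~\ref{L:recursion} using $\mathbf{c}_g(2g-1)=0$, and then solve it (your explicit telescoping with double-factorial identities is just a more detailed write-up of the paper's ``unique solution of this recursion'' remark). For the EGF, you verify the identity by direct binomial expansion of $\sqrt{1\pm 2x}$, whereas the paper recognizes $\mathbf{c}_g(2g)/(2g)!$ as the even-indexed Catalan numbers $C_{2g}/4^g$ and takes the even part of $\mathbf{C}_0(x/2)$ --- an equivalent computation.
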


\begin{proof}
According to the recursion eq.~(\ref{E:recursion}), we have
$$(2g+1){\bf c}_g(2g)=(2g-1)(4g-1)(4g-3) {\bf c}_{g-1}(2g-2)$$
since ${\bf c}_g(2g-1)=0$ from Theorem \ref{E:GF}.  The asserted formula
is the unique solution of this recursion with initial value ${\bf c}_1(2)=1$.
For the second assertion, we compute
\begin{eqnarray*}
\sum_{g=0}a_g \frac{x^{2g}}{(2g)!}       
&=& \sum_{g=0} \frac{(4g)!}{(2g)!(2g+1)!} \left(\frac{x}{2} \right)^{2g}\\
            &=& \sum_{g=0} C_{2g} \left(\frac{x}{2} \right)^{2g}\\
            &=& \frac{{\bf C}(\frac{x}{2})+{\bf C}(-\frac{x}{2})}{2}\\
            &=& \frac{\sqrt{1+2x}-\sqrt{1-2x}}{2x}
\end{eqnarray*}
as was claimed.
\end{proof}

The formulas for ${\bf c}_g(2g)$ and ${\bf c}_1(n)$ were given in 
\cite{Cori-Marcus}.  In fact and as illustrated in the proofs,
our methods provide a framework for computing
explicit expressions of $\mathbf{c}_g(n)$, for any $g$.

%%%%
%%%%%%%%%%%%%%%%%%%%%%%%%%%%%%%%%%%%%%%%%%%%%%%%%%%%%%%%%%%%%%%%%%%%%%%%%
%%%
\section{Macromolecular diagrams of genus $g$}
%%%
%%%%%%%%%%%%%%%%%%%%%%%%%%%%%%%%%%%%%%%%%%%%%%%%%%%%%%%%%%%%%%%%%%%%%%%%%
%%%

We extend the enumerative results of Theorem \ref{E:GF} to macromolecular diagrams by first
specializing to shapes and then
modifying shapes to produce macromolecular diagrams.

%%%
%%%%%%%%%%%%%%%%%%%%%%%%%%%%%%%%%%%%%%%%%%%%%%%%%%%%%%%%%%%%%%%%%%%%%%%%%%%
%%%
\begin{lemma}\label{L:GFbi-sh}
Suppose $g$ is a non-negative integer. Then
\begin{eqnarray}\label{E:GFsh}
{\bf S}_g(z,u) & = & \frac{1+z}{1+2z-zu}
                    {\bf C}_g\left(\frac{z(1+z)}{(1+2z-zu)^2}\right).
\end{eqnarray}
\end{lemma}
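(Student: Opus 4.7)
My plan is to express $\mathbf{S}_g(z,u)$ in two stages: first, to relate shapes to chord diagrams via the projection $\vartheta$, and second, to convert the resulting two-variable generating function of chord diagrams into a form involving only the single-variable $\mathbf{C}_g$ by means of a bijective identity for 1-chord insertion.

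For the first stage, for each shape $\gamma\in\mathscr{S}_g$ with $s(\gamma)$ chords of which $t(\gamma)$ are 1-chords, I would enumerate the chord diagrams $D$ with $\vartheta(D)=\gamma$: each chord $c$ of $\gamma$ is replaced by a stack of $k_c\geq 1$ parallel chords, and a routine analysis of such stacks shows that a chord of $D$ is a 1-chord precisely when it is the innermost chord of a stack whose base chord in $\gamma$ is itself a 1-chord. Thus the number of 1-chords of $D$ equals $t(\gamma)$, and summing over shapes and stack sizes yields
$$
\mathbf{C}_g(x,y)=\sum_{\gamma\in\mathscr{S}_g} y^{t(\gamma)}\left(\frac{x}{1-x}\right)^{s(\gamma)}=\mathbf{S}_g\!\left(\frac{x}{1-x},y\right),
$$
or equivalently $\mathbf{S}_g(z,u)=\mathbf{C}_g(z/(1+z),u)$ in the two-variable sense.

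For the second stage, I would establish the combinatorial identity
$$
\sum_{m\geq 0}\binom{m}{k}\mathbf{c}_g(n,m)=\binom{2n-k}{k}\,\mathbf{c}_g(n-k)
$$
via a bijection between pairs $(D,T)$, where $D\in\mathscr{C}_g(n)$ and $T$ is a $k$-subset of 1-chords of $D$, and pairs $(D',S)$, where $D'\in\mathscr{C}_g(n-k)$ and $S$ is a size-$k$ multiset drawn from the $2(n-k)+1$ slots of $D'$ (i.e., the gaps between consecutive vertices plus the two end slots). The forward map deletes the 1-chords in $T$ and records the slot each vacated in $D'$; the inverse inserts a chain of 1-chords at each chosen slot of $D'$. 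The count $\binom{2n-k}{k}$ is the number of such multisets. Preservation of genus under 1-chord deletion follows because, in the collapsed one-vertex fatgraph $\mathbb{G}_D$, a 1-chord corresponds to a loop whose two half-edges are adjacent in the cyclic order: removing it decreases both the edge count and the boundary-component count by one, leaving $g=(n+1-r)/2$ unchanged. Applying $(1-v)^m=\sum_k\binom{m}{k}(-v)^k$ with $v=1-y$ together with the series $\sum_k\binom{m+k}{k}z^k=(1-z)^{-m-1}$, this binomial identity transforms into the functional equation
$$
\mathbf{C}_g(x,y)=\frac{1}{1+x(1-y)}\,\mathbf{C}_g\!\left(\frac{x}{(1+x(1-y))^2}\right).
$$

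Finally, substituting $x=z/(1+z)$ and $y=u$ in the preceding identity and using the simplifications $1+x(1-u)=(1+2z-zu)/(1+z)$ and $x/(1+x(1-u))^2=z(1+z)/(1+2z-zu)^2$, together with $\mathbf{S}_g(z,u)=\mathbf{C}_g(z/(1+z),u)$, yields the lemma's formula. The main obstacle is the bijection of stage two: specifically, confirming that deleting a marked 1-chord preserves the genus even when the deletion converts an originally non-1-chord of $D$ into a 1-chord of $D'$, and treating correctly the reverse insertion at a slot located between the endpoints of an existing 1-chord of $D'$, where the ambient 1-chord must be lengthened in the reconstructed $D$ while the newly inserted 1-chord serves as the marked element of $T$; the bijection must track carefully which 1-chord is designated as marked in order to remain consistent in both directions.
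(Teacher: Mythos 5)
Your proposal is correct, and its first stage (inflating shapes into diagrams to get $\mathbf{C}_g(x,y)=\mathbf{S}_g(x/(1-x),y)$, then substituting $x=z/(1+z)$) coincides with the paper's. The difference lies in how the key functional equation $\mathbf{C}_g(x,y)=\frac{1}{1+x(1-y)}\,\mathbf{C}_g\bigl(\frac{x}{(1+x(1-y))^2}\bigr)$ is established. The paper labels a single $1$-chord to derive the one-step recursion $(m+1)\mathbf{c}_g(n+1,m+1)=(m+1)\mathbf{c}_g(n,m+1)+(2n+1-m)\mathbf{c}_g(n,m)$, rewrites it as a first-order PDE in $x,y$, and then verifies that the candidate closed form satisfies the same PDE, the specialization at $y=1$, and the vanishing condition $\mathbf{c}_g^*(n,m)=0$ for $m>n$, concluding by uniqueness of the solution of the recursion. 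You instead prove the exact binomial identity $\sum_m\binom{m}{k}\mathbf{c}_g(n,m)=\binom{2n-k}{k}\mathbf{c}_g(n-k)$ by a $k$-fold deletion/insertion bijection and recover the closed form by an Euler-type binomial transform; I checked the identity against small cases ($g=0$, $n\leq 3$ and $g=1$, $n=2$) and the transform computation, and both are right. Your route buys a closed-form combinatorial identity and avoids the PDE-uniqueness step, at the cost of a more delicate bijection: you must verify that simultaneously deleted marked $1$-chords landing in a common slot necessarily form a contiguous chain (they do, since the intervening vertices are exhausted by consecutive marked pairs), whereas the paper only ever inserts one chord at a time. Both arguments rest on the same topological fact, that inserting or deleting a $1$-chord changes $n$ and $r$ each by one and so preserves $g=\tfrac{1}{2}(n+1-r)$, which your splice argument on the cycles of $\tau\circ\iota$ correctly justifies.
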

%%%
%%%%%%%%%%%%%%%%%%%%%%%%%%%%%%%%%%%%%%%%%%%%%%%%%%%%%%%%%%%%%%%%%%%%%%%%%%%
%%%
\begin{proof}
We first prove
\begin{eqnarray}\label{E:GFbi}
{\bf C}_g(x,y) & = & \frac{1}{x+1-yx}~
                          {\bf C}_g\left(\frac{x}{(x+1-yx)^2}\right)
\end{eqnarray}
and to this end, choose $\xi\in \mathscr{C}_g(s+1,m+1)$ and label one of its
$1$-chords. Since we can label any of the $(m+1)$ $1$-chords of $\xi$,
$(m+1)~\mathbf{c}_g(s+1,m+1)$ different such labeled linear chord diagrams arise.
On the other hand, to produce $\xi$ with this labeling, we can add one labeled $1$-chord to an element of
$\mathscr{C}_g(s,m+1)$ by inserting a parallel copy of an existing $1$-chord or by
inserting a new
labeled $1$-chord in an element of $\mathscr{C}_g(s,m)$, where
we may only insert the $1$-chord between two vertices not
already forming a $1$-chord. It follows that we have the recursion
\begin{equation*}\label{E:regk}
(m+1)~\mathbf{c}_g(n+1,m+1)=
                        (m+1)~\mathbf{c}_g(n,m+1)+(2n+1-m)~\mathbf{c}_g(n,m)
\end{equation*}
or equivalently the PDE
\begin{equation}\label{E:diffgf}
\frac{\partial {\bf C}_g(x,y)}{\partial y}=x\frac{\partial {\bf
C}_g(x,y)}{\partial y}+2x^2\frac{\partial {\bf C}_g(x,y)}{\partial
x}+x{\bf C}_g(x,y)-xy\frac{\partial {\bf C}_g(x,y)}{\partial y},
\end{equation}
which is thus satisfied by ${\bf C}_g(x,y)$.

On the other hand,
\begin{equation*}
{\bf C}^*_g(x,y)=\frac{1}{x+1-yx}{\bf C}_g\left(\frac{x}{(x+1-yx)^2}\right)
\end{equation*}
is also a solution of eq.~(\ref{E:diffgf}), which
specializes to
${\bf C}_g(x)={\bf C}^*_g(x,1)$, and moreover, we have
$\mathbf{c}^*_g(n,m)=[x^ny^m]{\bf C}^*_g(x,y) = 0$, for $m>n$.
Indeed, the first assertion is easily verified directly, the specialization is obvious,
and the fact that $y$ only appears in the
power series ${\bf C}^*_g(x,y)$ in the form of products $xy$
implies that $\mathbf{c}^*_g(n,m) =  0$, for $m>n$.
Thus, the coefficients $\mathbf c_g^*(n,m)$ satisfy the same recursion and initial conditions
as $\mathbf c_g(n,m)$, and hence by induction on $n$, we conclude $\mathbf c_g^*(n,m)=\mathbf c_g(n,m)$,
for $n,m\geq 0$.  This
proves that ${\bf C}_{g}(n,m)$ indeed satisfies eq.~(\ref{E:GFbi}) as was claimed.

To complete the proof of eq.~(\ref{E:GFsh}), we use that
the projection $\vartheta$ is surjective and affects neither the genus nor the number
of $1$-chords, namely,
\begin{equation*}
{\bf C}_g(x,y)= \sum_{m\geq 0} \sum_{{\gamma~\text{having genus $g$}} \atop \text{and $m$ $1$-chords}}
                {\bf C}_\gamma(x,y).
\end{equation*}
Furthermore, if a shape $\gamma$ has $s$ chords, of which $t$ are $1$-chords, then we evidently have
\begin{eqnarray*}
{\bf C}_\gamma(x,y) & = & \left(\frac{x}{1-x}\right)^s y^{t},
\end{eqnarray*}
which shows that ${\bf C}_\gamma(x,y)$ depends only on the total number of
chords and number of $1$-chords in $\gamma$. Consequently,
\begin{equation}\label{E:kkk1}
{\bf C}_g(x,y)= \sum_{m\geq 0} \sum_{{\gamma~\text{having genus $g$}} \atop \text{and $m$ $1$-chords}}
{\bf C}_\gamma(x,y) =\sum_{s\geq 0}\sum_{m=0}^{s}
\mathbf{s}_g(s,m)\left(\frac{x}{1-x}\right)^sy^m
={\bf S}_g\left(\frac{x}{1-x},y\right).
\end{equation}
Setting $z=\frac{x}{1-x}$, i.e., $x=\frac{z}{1+z}$, and $u=y$, we arrive at
\begin{equation*}
{\bf S}_g(z,u)=\frac{1+z}{1+2z-zu}{\bf C}_g
\left(\frac{z(1+z)}{(1+2z-zu)^2}\right),
\end{equation*}
as required.
\end{proof}

%%%%%%%%%%%%%%%%%%%%%%%%%%%%%%%%%%%%%%%%%%%%%%%%%%%%%%%%%%%%%%%%%%%%%%

\begin{lemma} \label{L:GFbull}For any shape $\gamma$ with $s\geq 1$ chords and
$m\geq 0$ 1-chords, we have
$${\bf D}_{\gamma,\sigma}(z)=(1-z)^{-1}\left(\frac{z^{2\sigma}}{(1-z^2)(1-z)^2-(2z-z^2)
z^{2\sigma}}\right)^s \, z^m.$$
In particular, ${\bf D}_{\gamma,\sigma} (z)$ depends only upon the number of
chords and 1-chords in $\gamma$.
\end{lemma}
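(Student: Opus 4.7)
I will prove the formula by decoding each macromolecular diagram in $\mathscr{D}_{\gamma,\sigma}$ as an independent collection of ``chains'' indexed by the chords of $\gamma$, and then computing the generating function as a product of elementary factors.

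Given $D\in\mathscr{D}_{\gamma,\sigma}$, each chord $c$ of $\gamma$ corresponds under $\vartheta$ to a chain of $j_c\geq 1$ nested stacks $S_{c,1}\supset\cdots\supset S_{c,j_c}$ in $D$ of respective sizes $k_{c,i}\geq\sigma$, whose chords all collapse to $c$ once the unpaired vertices are deleted and the stacks collapsed. Two consecutive stacks of the chain must be separated by at least one unpaired vertex on either the left side (between their consecutive blocks of least vertices) or the right side (between their consecutive blocks of greatest vertices), equivalently $u_L+u_R\geq 1$ for the corresponding padding counts; otherwise they would already be consecutively parallel in $D$ and would merge into a single stack. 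The $2s$ clusters of least- and greatest-vertex blocks are arranged along the backbone in the order prescribed by $\gamma$, leaving $2s+1$ ``$\gamma$-gap'' positions (two outer gaps and the $2s-1$ internal gaps between adjacent $\gamma$-vertices) where additional unpaired vertices may be inserted; the $m$ gaps corresponding to the inside of a 1-chord of $\gamma$ must each contain at least one unpaired vertex (to avoid producing a 1-chord in $D$), while the remaining $2s+1-m$ gaps have no lower-bound constraint.

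The key observation that makes the count depend only on $s$ and $m$ is that the requirement ``distinct chord chains do not spuriously merge in the linear chord diagram obtained from $D$ by deleting unpaired vertices'' is automatic once $\gamma$ is a shape: for any nested pair $c\supset c'$ of chords of $\gamma$, shape validity (no two chords of $\gamma$ are consecutively parallel) forces at least one further $\gamma$-vertex to lie between $c$ and $c'$ on the left or the right, and that vertex's cluster physically separates the two chains in $D$; meanwhile disjoint and crossing chord pairs can never be consecutively parallel to begin with. Consequently the chain generating function evaluates to
$$F_c(z)=\sum_{j\geq 1}\left(\frac{z^{2\sigma}}{1-z^2}\right)^{j}\left(\frac{2z-z^2}{(1-z)^2}\right)^{j-1}=\frac{z^{2\sigma}(1-z)^2}{D(z)},$$
with $D(z)=(1-z^2)(1-z)^2-(2z-z^2)z^{2\sigma}$; the $\gamma$-gap factor is $z^m/(1-z)^{2s+1}$; and the product of $F_c(z)$ over the $s$ chords of $\gamma$ multiplied by the gap factor telescopes (the $(1-z)^{2s}$ from the chains and the $(1-z)^{-(2s+1)}$ from the gaps combine to $(1-z)^{-1}$) to give exactly $(1-z)^{-1}(z^{2\sigma}/D(z))^s\,z^m$, as required. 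The principal technical step is to verify the chain bijection together with the automatic-non-merging claim; the subsequent algebraic simplification is then immediate, and the final ``depends only on $s,m$'' assertion follows at once from the product form.
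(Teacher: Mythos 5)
Your proposal is correct and takes essentially the same route as the paper: you inflate each chord of $\gamma$ into a chain of nested stacks of size at least $\sigma$ separated by unpaired vertices with the "not both gaps empty" constraint (your geometric series $\sum_{j\ge 1}a^jb^{j-1}$ is exactly the paper's $\mathcal{K}^\sigma\times \textsc{Seq}(\mathcal{N}^\sigma)$ construction), and then insert unpaired vertices into the $2s+1$ gaps with the $m$ gaps inside $1$-chords forced nonempty. Your additional paragraph verifying that distinct chains cannot spuriously merge because $\gamma$ is a shape makes explicit a point the paper's proof leaves implicit.
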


\begin{proof}
We shall construct $\sqcup_{n\geq 0}{\mathcal D}_{\gamma,\sigma}(n)$ with simple combinatorial building blocks.  As a point of notation and as usual, if $\mathcal X=\sqcup_{n\geq 0}\mathcal X(n)$ is a collection of sets of partial matchings on $n\geq 0$ vertices, then we consider the corresponding generating function $\mathbf X(z)=\sum_{n\geq 0} {\bf x}(n) z^n$.
In particular, we have the set $\mathcal Z$ consisting of a single vertex with generating
function ${\bf Z}(z)=z$ and the set $\mathcal R$ consisting of a single arc and no additional vertices
with generating function ${\bf R}(z)=z^2$.

Let $=$ denote set-theoretic bijection, $+$ disjoint union, $\times$ Cartesian product with iteration written as exponentiation,
$\mathcal I$  the empty set, and $ \textsc{Seq}({\mathcal X})=\mathcal I+\mathcal X+\mathcal X^2+\cdots$, for any collection $\mathcal X$.

Define the set $\mathcal L=\textsc {Seq}(\mathcal Z)$ consisting
of any number $n\geq 0$ of isolated vertices and no chords, with its generating function ${\bf L}(z)=1/(1-z)$,
and the set  $\mathcal K^\sigma$ comprised of a single stack with at least $\sigma\geq 1$ arcs and no additional vertices,
with its generating function ${\bf K}^\sigma(z)=z^{2\sigma}/(1-z)$.

The collection
$\mathcal{N}^{\sigma}=\mathcal{K}^{\sigma}\times
\left(\mathcal{Z}\times\mathcal{L}
+\mathcal{Z}\times\mathcal{L}+\left(\mathcal{Z}\times
\mathcal{L}\right)^2\right)$ of all single stacks together with a non-empty interval
of unpaired vertices on at least one side thus has generating function
\begin{equation*}
\mathbf{N}^\sigma(z)=\frac{z^{2\sigma}}{1-z^2}\left(2\frac{z}{1-z}
+\left(\frac{z}{1-z}\right)^2\right).
\end{equation*}
Furthermore, the collection $\mathcal M^\sigma= \mathcal K^\sigma\times \textsc{Seq}(\mathcal N^\sigma)$ of all pairs consisting of a stack $\mathcal K^\sigma$ and a (possibly empty) sequence of
neighboring stacks likewise has generating function
\begin{eqnarray*}
\mathbf{M}^\sigma(z)=\frac{\mathbf{K}^\sigma(z)}{1-\mathbf{N}^\sigma(z)}=
\frac{\frac{z^{2\sigma}}{1-z^2}}
{1-\frac{z^{2\sigma}}{1-z^2}\left(2\frac{z}{1-z}
+\left(\frac{z}{1-z}\right)^2\right)},
\end{eqnarray*}
where only intervals of isolated vertices as are necessary to separate the neighboring
stacks have been inserted in $\mathcal M^\sigma$.

To complete the construction and count, we must still insert possible unpaired vertices at the remaining
$2s+1$ possible locations, where there must be a non-trivial such insertion between the endpoints of each 1-chord.  These insertions correspond to ${\mathcal L}^{2s+1-m}\times (\mathcal Z\times \mathcal L)^m$,
and we therefore conclude that $\sqcup_{n\geq 0}{\mathcal P}_\gamma(n)=\left(\mathcal{M}^\sigma\right)^s\times\mathcal{L}^{2s+1-m}\times
\left(\mathcal{Z}\times \mathcal{L}\right)^{m}$ has the asserted generating function
\begin{eqnarray*}
{\bf D}_{\gamma,\sigma}(z) & = &
\left(\frac{\frac{z^{2\sigma}}{1-z^2}}
{1-\frac{z^{2\sigma}}{1-z^2}\left(2\frac{z}{1-z}
+\left(\frac{z}{1-z}\right)^2\right)}\right)^s
\left(\frac{1}{1-z}\right)^{2s+1-m}
\left(\frac{z}{1-z}\right)^{m}\nonumber\\
&=&(1-z)^{-1}\left(\frac{z^{2\sigma}}{(1-z^2)(1-z)^2-(2z-z^2)
z^{2\sigma}}\right)^s \, z^m.
\end{eqnarray*}
\end{proof}

Our main result about macromolecular diagrams follows.

%%%
%%%%%%%%%%%%%%%%%%%%%%%%%%%%%%%%%%%%%%%%%%%%%%%%%%%%%%%%%%%%%%%%%%%%%%%%
%%%
\begin{theorem}\label{T:genus}
Suppose $g$ and $\sigma$ are positive natural numbers and let  $u_\sigma(z)
=\frac{(z^2)^{\sigma-1}}{z^{2\sigma}-z^2+1}$. Then the generating function ${\bf D}_{g,\sigma}(z)$
is algebraic over $\mathbb{C}(x)$ and given by
\begin{eqnarray}\label{E:oho}
{\bf D}_{g,\sigma}(z) & = & \frac{1}{u_\sigma(z) z^2-z+1}\
                            {\bf C}_g\left(\frac{u_\sigma(z)z^2}
                            {\left(u_\sigma(z) z^{2}-z+1\right)^2}\right).
\end{eqnarray}
In particular, for arbitrary but fixed $g$ and $\gamma_2\approx 1.9685$, we have
\begin{equation}
[z^n]{\bf D}_{g,2}(z)\sim k_g\,n^{3(g-\frac{1}{2})} \gamma_2^n,
\end{equation}
for some constant $k_g$.
\end{theorem}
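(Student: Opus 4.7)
The plan is to combine Lemmas \ref{L:GFbull} and \ref{L:GFbi-sh} into a generating function identity for ${\bf D}_{g,\sigma}(z)$ and then extract the asymptotic by singularity analysis.

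First I would exploit that the projection $\vartheta$ restricts to a genus-preserving surjection of macromolecular diagrams onto shapes, so
\[
{\bf D}_{g,\sigma}(z) \, = \, \sum_{\gamma\in\sqcup_n\mathscr S_g(n)} {\bf D}_{\gamma,\sigma}(z).
\]
Since Lemma \ref{L:GFbull} shows ${\bf D}_{\gamma,\sigma}(z)$ depends only on the numbers $s$ and $m$ of chords and $1$-chords of $\gamma$, this sum collapses to
\[
{\bf D}_{g,\sigma}(z) \, = \, (1-z)^{-1}\,{\bf S}_g\bigl(w(z),\, z\bigr), \qquad w(z)\, = \,\frac{z^{2\sigma}}{(1-z^2)(1-z)^2-(2z-z^2)z^{2\sigma}}.
\]
Substituting this into Lemma \ref{L:GFbi-sh} then yields ${\bf D}_{g,\sigma}(z)$ as a prefactor times ${\bf C}_g$ of a rational function of $z$.

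The main obstacle is the algebraic simplification showing this agrees with (\ref{E:oho}). Writing $D(z)$ for the denominator of $w(z)$, the key polynomial identities to verify by direct expansion are
\[
D(z)+z^{2\sigma}\, = \,(1-z)^2\bigl(z^{2\sigma}-z^2+1\bigr)
\]
and
\[
D(z)+(2-z)z^{2\sigma}\, = \,(1-z)\bigl(z^{2\sigma}-z^2+1\bigr)\bigl(u_\sigma(z)z^2-z+1\bigr),
\]
the second of which uses the rearrangement $u_\sigma(z)z^2=z^{2\sigma}/(z^{2\sigma}-z^2+1)$. Dividing these in the appropriate combinations collapses the prefactor $(1-z)^{-1}(1+w)/(1+2w-wz)$ to $1/(u_\sigma(z)z^2-z+1)$ and reduces the argument $w(1+w)/(1+2w-wz)^2$ of ${\bf C}_g$ to $u_\sigma(z)z^2/(u_\sigma(z)z^2-z+1)^2$, as asserted. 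Since ${\bf C}_g$ is algebraic over $\mathbb C(z)$ by Theorem \ref{E:GF} and $u_\sigma$ is rational, ${\bf D}_{g,\sigma}(z)$ is then algebraic as well.

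For the asymptotic at $\sigma=2$, let $\psi(z)=u_2(z)z^2/(u_2(z)z^2-z+1)^2$, which is rational. By Theorem \ref{E:GF}, ${\bf C}_g(y)$ has its unique singularity at $y=1/4$ with local behavior $P_g(1/4)(1-4y)^{1/2-3g}$. Hence the dominant positive singularity $z_0=1/\gamma_2$ of ${\bf D}_{g,2}(z)$ is the smallest positive root of $\psi(z)=1/4$, provided one verifies numerically (or by monotonicity of $\psi$ on $[0,z_0]$) that the prefactor $1/(u_2(z)z^2-z+1)$ is regular there and $\psi'(z_0)\neq 0$. Writing $1-4\psi(z)\sim c(1-z/z_0)$ with $c=-4\psi'(z_0)z_0>0$ near $z_0$, the transfer theorems of singularity analysis then deliver $[z^n]{\bf D}_{g,2}(z)\sim k_g\,n^{3g-3/2}\,\gamma_2^n$, with $k_g$ explicit in terms of $P_g(1/4)$, $z_0$ and $\psi'(z_0)$.
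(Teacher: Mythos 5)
Your proposal follows essentially the same route as the paper: decompose ${\bf D}_{g,\sigma}$ over shapes via $\vartheta$, collapse the sum using Lemma \ref{L:GFbull}'s dependence on only $s$ and $m$ to get $(1-z)^{-1}{\bf S}_g(w(z),z)$, substitute into Lemma \ref{L:GFbi-sh}, and finish with algebraicity of composed algebraic/rational functions plus singularity analysis at the smallest positive root of $\theta_2(z)=1/4$. The only difference is that you spell out the polynomial identities behind the simplification that the paper dismisses as ``direct computation'' (and your identities check out), and you state the singular expansion at $z_0=1/\gamma_2$ more carefully than the paper does.
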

%%%
%%%%%%%%%%%%%%%%%%%%%%%%%%%%%%%%%%%%%%%%%%%%%%%%%%%%%%%%%%%%%%%%%%%%%%%%
%%%
\begin{proof}
Since each element $\mathscr{D}_{g,\sigma}(n)$ projects to a unique shape
$\gamma$ with genus $g$ and some number $m\geq 0$ of $1$-chords, we have
\begin{equation}\label{E:Hgf}
{\bf D}_{g,\sigma}(z) =
\sum_{m\geq 0}\sum_{\gamma\,  \text{\rm having genus $g$}
\atop  \text{\rm and $m$ $1$-chords}}
\mathbf{D}_{\gamma,\sigma}(z).
\end{equation}
According to Lemma \ref {L:GFbull}, ${\bf D}_{\gamma,\sigma}(z)$ only depends on
the number of chords and $1$-chords of $\gamma$, and we can therefore express
\begin{eqnarray*}
{\bf D}_{g,\sigma}(z)
 & = & {1\over{z-1}}~{\bf S}_g\biggl({{z^{2g}}\over{(1-z^2)(1-z)^2-(2z-z^2)z^{2\sigma}}},z\biggr)\\
 & = &\frac{1}{(1-z) +{u_\sigma(z)}z^2}\,
{\bf C}_g\left(\frac{z^2\, {u_\sigma(z)}}
{\bigl((1-z) +{u_\sigma(z)}z^2
\bigr)^2}\right)
\end{eqnarray*}
using Lemma \ref{L:GFbi-sh} in order to confirm eq.~(\ref{E:oho}),
where the second equality follows from direct computation.
Let
\begin{equation*}\label{E:formula}
\theta_\sigma(z)=\frac{z^2\, {u_\sigma(z)}}
{\bigl((1-z) +{u_\sigma(z)}z^2
\bigr)^2}
\end{equation*}
denote the argument of ${\bf C}_g$ in this expression

Since any algebraic
function is in particular $D$-finite as well as $\Delta$-analytic
\cite{Stanley}, we conclude from Theorem \ref{E:GF} that
\begin{equation}\label{E:sing}
\mathbf{C}_g(z)=x_g\, (1-4z)^{-(3g-1/2)}(1+o(1))
\quad\text{for }z\rightarrow 1/4,
\end{equation}
for some constant $x_g$.  Since ${\bf C}_g(z)$ is algebraic
over $K=\mathbb C(z)$, there exist polynomials $R_i(z)$, for $i=1,\ldots ,\ell$,
such that $\sum_{i=1}^\ell R_i(z)\, {\bf C}_g(z)^i=0$, whence
$\sum_{i=1}^\ell R_i(\theta_\sigma(z))\, {\bf C}_g(\theta_\sigma(z))^i=0$ as well.
Setting $L=\mathbb{C}(\theta_{\sigma}(z))$, we thus have
\begin{equation*}
[L({\bf C}_g(\theta_\sigma(z)))\ \colon \ K]=
[L({\bf C}_g(\theta_\sigma(z)))\ \colon \  L]\cdot
[L \ \colon \  K]<\infty,
\end{equation*}
i.e.,~${\bf D}_{g,\sigma}(z)$ is algebraic over $K$.
Pringsheim's Theorem \cite{Flajolet-Sedgewick}
guarantees that
for any $\sigma\ge 1$, ${\bf D}_{g,\sigma}(z)$ has a dominant real
singularity $\gamma_\sigma>0$.

In particular, for $\sigma=2$, we verify directly
that $\gamma_2$ is the unique solution of minimum modulus of
$\theta_2(z)=1/4$, which is strictly smaller than any other singularities of
$\theta_2(z)$ and satisfies $\theta'(\gamma_2)\neq 0$. It follows that
${\bf D}_{g,2}(z)$ is governed by the supercritical paradigm 
\cite{Flajolet-Sedgewick},
and hence
$\mathbf{D}_{g,2}(z)$ has the singular expansion
\begin{equation}\label{E:singprime}
\mathbf{D}_{g,2}(z)=k'_g\, (1-\gamma_2)^{-(3g-1/2)}(1+o(1))
\quad\text{for }z\rightarrow \gamma_2,
\end{equation}
for some constant
$k'_g$.

For arbitrary but fixed $g$, we thus find the asymptotics
\begin{equation}
[z^n]{\bf D}_{g,2}(z)\sim k_g \, n^{3(g-1/2)}\, \gamma_2^n,
\end{equation}
where $\gamma_2\approx 1.9685$ as was claimed.
\end{proof}

\section{Acknowledgements.}
JEA and RCP are supported by QGM (Centre for Quantum
Geometry of Moduli Spaces, funded by the Danish National Research Foundation).
CMR is supported by the 973 Project, the PCSIRT Project of
the Ministry of Education, the Ministry of Science and Technology,
and the National Science Foundation of China.

\vfill\eject

\bibliographystyle{amsplain}

\end{document}